\theoremstyle{plain}
\newtheorem{thm}{\bfseries Theorem}[section]
\newtheorem{lem}[thm]{\bfseries Lemma}
\newtheorem{ex}[thm]{\bfseries Example}
\newtheorem{rem}[thm]{\bfseries Remark}
\newtheorem{prop}[thm]{\bfseries Proposition}
\newtheorem{cor}[thm]{\bfseries Corollary}
\date{}
\title{Configuration complexes and a variant of Cathelineau's complex in weight 3}
\author{Raziuddin Siddiqui \thanks{email: rdsiddiqui@fuuast.edu.pk} \\ \small \textit{Mathematical Sciences Research Centre}\\ 
\small \textit{Federal Urdu University, Karachi.} }
\begin{document}
\maketitle

\begin{abstract}
In this paper we consider the Grassmannian complex of projective configurations in weight 2 and 3, and Cathelineau's infinitesimal polylogarithmic complexes. Our main result is a morphism of complexes between the Grassmannian complex and the associated infinitesimal polylogarithmic complex.
%will form morphism of complexes between the Grassmannian complex and to a variant of the Cathelineau's complex and also to the Cathelineau's tangential complex. Our approach has several ingredients.

In order to establish this connection we introduce an $F$-vector space $\beta^D_2(F)$, which is an intermediate structure between a $\varmathbb{Z}$-module $\mathcal{B}_2(F)$ (scissors congruence group for $F$) and Cathelineau's $F$-vector space $\beta_2(F)$ which is an infinitesimal version of it. The structure of $\beta^D_2(F)$ is also infinitesimal but it has the advantage of satisfying similar functional equations as the group $\mathcal{B}_2(F)$. We put this in a complex to form a variant of Cathelineau's infinitesimal complex for weight 2.
\end{abstract}
\textbf{Key words:} Grassmannian complex, infinitesimal complex, polylogarithm, triple-ratio.

\noindent\textbf{Subject Classification:} 11G55
\section{Introduction}
In his seminal papers (\cite{Gonc},\cite{Gonc1},\cite{Gonc2},\cite{Gonc3}), Goncharov uses the Grassmannian complex (first introduced by Suslin (see \cite{Sus2})) associated to points in $\varmathbb{P}^2$ in order to prove Zagier's conjecture on polylogarithms and special $L$-values (see \cite{Zag1}) for weight $n=3$. This conjecture in particular asserts that the values of Dedekind zeta function $\zeta_F(s)$ for some number field $F$ at an integer point $s=n\geq2$ can be expressed as a determinant of $n$-logarithms evaluated at points in  $F$. It was known for $n=2$ by work of Suslin, Borel and Bloch and also proved in a slightly weaker form by Zagier himself. Goncharov forms  an ingenious proof for weight $n=3$.

In the process, he introduces motivic complexes $\Gamma(n)$. Cathelineau investigates variants of these complexes in the additive (both infinitesimal and tangential) setting (see \cite{Cath1},\cite{Cath2},\cite{Cath3}). %Cathelineau constructed infinitesimal and tangential versions of complexes $\Gamma(n)$. 

One of the most important ingredients of Goncharov's work is the triple-ratio (Goncharov called it generalized cross-ratio) which is first introduced by Goncharov (see \cite{Gonc1}). In his earlier paper Goncharov had a formula (which is not visibly antisymmetric) for the morphism $f^{(3)}_2:C_6(3)\rightarrow \mathcal{B}_3(F)$, (see $\S$4 in \cite{Gonc}), for any field $F$, where $C_6(3)$ is the free abelian group generated by the configurations of 6 points in 3 dimensional $F$-vector space modulo the action of $GL_3(F)$ . Goncharov introduced the triple ratio by anti-symmetrization  of formula for $f^{(3)}_2$. Having defined the triple-ratio he described an antisymmetric formula for the morphism $f_6(3):C_6(3)\rightarrow \mathcal{B}_3(F)$, but with the restriction that it applies to generic configuration only, where points are in generic position (see Formula 3.9 in \cite{Gonc1}) (unfortunately, in \cite{Gonc1} there was a missing factor in the formula; this missing factor of $\frac{15}{2}$ was pointed out by Gangl and Goncharov provided a proof of the corrected formula in the appendix of \cite{Gonc4}). By using algebraic $K$-theory he constructed a map of complexes from the Grassmannian complex to his own complex and then he proved Zagier's conjecture for weight $n=3$.

Our point of view is to bring the geometry of configuration spaces into infinitesimal setting. We tried to find suitable  morphisms between the Grassmannian subcomplex $(C_*(n),d)$ (see diagram \eqref{Gonbicomp} in \ref{Gra_comp}) and Cathelineau's analogues of Goncharov's complexes $\Gamma(n)$. For weight $n=2$ and $n=3$, we proved that the corresponding diagrams in the infinitesimal setting connecting the Grassmannian subcomplex $(C_*(n),d)$ (see diagram \eqref{Gonbicomp} in $\S$\ref{Gra_comp}) are commutative (see $\S$\ref{add_tri}). 

Goncharov outlined the proof for commutativity of the left square of diagram made by Grassmannian complex and weight $n=3$ motivic complex (see $\S$3 in \cite{Gonc1} for the actual diagram and  appendix of \cite{Gonc4} for the proof).  For this he worked in $\bigwedge^2F^\times\otimes F^\times$, using the factorisation of $1-\frac{\Delta(l_0,l_1,l_3)\Delta(l_1,l_2,l_4)\Delta(l_2,l_0,l_5)}{\Delta(l_0,l_1,l_4)\Delta(l_1,l_2,l_5)\Delta(l_2,l_0,l_3)}$, where $\Delta(l_i,l_j,l_k)$ denotes some $3\times3$-determinant, into a $3\times3$-determinant and a $6\times6$-determinant and also had to appeal to a deeper result in algebraic $K$-theory (see Lemma 5.1 and Proposition 5.2 in \cite{Gonc4}).

We observe that each term in the triple-ratio can be rewritten as product of two ``projected'' cross-ratios in $\varmathbb{P}^2$, which enables us to give an elementary proof (which does not use algebraic $K$-theory) of our main result (Theorem \ref{claim3b}). 

Furthermore, we define infinitesimal group $\beta_2^D(F)$ for any derivative $D\in Der_{\varmathbb{Z}}F$ over a field $F$ which has more or less similar functional equations as the group $\mathcal{B}_2(F)$ and use it to our advantage for the proof which works almost same for the two direct summand involving $\beta^D_2(F)\otimes F^\times$ and $F\otimes \mathcal{B}_2(F)$. In summary, the proof of Theorem \ref{claim3b} consists of rewriting the triple-ratio as the product of two cross-ratios, combinatorial techniques and the use of functional equations in $\beta_2^D(F)$ and $\mathcal{B}_2(F)$. 

\section{Preliminaries and Background}
As we mentioned in the introduction, we are relating the Grassmannian complex to a variant of Cathelineau's complex. We will also present the variant of Cathelineau's (infinitesimal) complex in $\S$\ref{def1} and will try to form a generalized complex for $\beta_n^D(F)$ as Goncharov's work in \cite{Gonc3}.
%We can see a nice presentation of the relationship between Grassmannian complex and Bloch-Suslin complex which forms a bicomplex or Goncharov's complex if we go through the Goncharov's early 90's results (see \cite{Gonc},\cite{Gonc1},\cite{Gonc2}). He shows that the associated diagrams of complexes are commutative and gives an excellent interpretation of the popular Abel's five-term relation in terms of geometric configurations in projective space. We will try to form some similar diagrams here. We are trying to relate those Grassmannian complexes to Cathelineau's Complexes (\cite{Cath1},\cite{Cath2}). We will see that these diagrams of complexes are also commutative. We also want to see the shape of Cathelineau's four-term relation in terms of projective geometric configurations.
%Goncharov has already found the maps between Grassmannian complex and the Bloch-Suslin complex in the projective space (see \cite{Gonc,Gonc1,Gonc2}). In these papers he nicely shows that the associated bicomplexes are commutative and writes Abel's five term relation in terms of projective geometric configurations. This chapter is related to Cathelineau's complexes (see \cite{Cath1,Cath2}) and we will try to connect the Grassmannian complex to the Cathelineau's complex also will try to write popular four-term relation by using geometric configurations. 
 
\subsection{Grassmannian complex}\label{Gra_comp}
In this section, we recall concepts from (see \cite{Gonc}, \cite{Gonc2}).
Consider $\tilde{C}_m(X)$, which is the free abelian group generated by elements $(x_1,\ldots,x_m)\in X^m$ for some set $X$ with $x_i \in X$. Then we have a simplicial complex $(\tilde{C}_\ast(X),d)$ generated by simplices whose vertices are the elements of $X$, where the differential in degree -1 is given on generators by 
\[d: C_m(X)\rightarrow C_{m-1}(X)\]

\begin{equation}\label{ddef}
d\colon(x_1,\ldots,x_{m})\mapsto\sum_{i=0}^m (-1)^i(x_1,\ldots,\hat{x_i},\ldots,x_{m})\notag
\end{equation}
Let $G$ be a group acting on $X$. The elements of $G\setminus X^{m}$ are called configurations of $X$, where $G$ is acting diagonally on $X^m$. Further assume that $C_m(X)$ is the free abelian group generated by the configurations of $m$ elements of $X$ then there is a complex $(C_\ast(X),d)$, and $\tilde{C}_\ast(X)_G$ be the group of coinvariants of the natural action of G on $C_\ast(X)=\tilde{C}_\ast(X)$. For $m> n$, let us define $C_m(n)$ (or $C_m(\varmathbb{P}^{n-1}_F)$) which is the free abelian group, generated by the configurations of $m$ vectors in an $n$-dimensional vector space $V_n=\varmathbb{A}^n_F$  over a field $F$ (any $n$ vectors arising by using $X=V_n$) (or $m$ points in $\varmathbb{P}^{n-1}_F$) in generic position (an $m$-tuple of vectors in an $n$-dimensional vector space $V_n$ is in generic position if $n$ or fewer number of vectors are linearly independent). Apart from the above differential $d$, we have another differential map:
\[d':C_{m+1}(n+1)\rightarrow C_{m}(n)\]
\[d'\colon(l_0,\ldots,l_{m})\mapsto\sum_{i=0}^m (-1)^i(l_i|l_0,\ldots,\hat{l_i},\ldots,l_{m}),\]
where $(l_i|l_0,\ldots,\hat{l_i},\ldots,l_{m})$ is the configuration of vectors in $V_{n+1}/\langle l_i\rangle$ defined as the $n$-dimensional quotient space, obtained by the projection of vectors $l_j\in V_{n+1}$, $j\neq i$, projected from $C_{m+1}(n+1)$ to $C_m(n)$ from which we have the following bicomplex
\begin{displaymath}\label{Gonbicomp}
\xymatrix{
 &&\vdots\ar[d]^{}	&\vdots\ar[d]	&\vdots\ar[d] \\
&\cdots\ar[r]	&C_{n+5}(n+2)\ar[d]^{d'}\ar[r]^{d}  &C_{n+4}(n+2)\ar[d]^{d'}\ar[r]^{d}    &C_{n+3}(n+2)\ar[d]^{d'}\\
&\cdots\ar[r]	&C_{n+4}(n+1)\ar[d]^{d'}\ar[r]^{d} &C_{n+3}(n+1)\ar[d]^{d'}\ar[r]^{d}	  &C_{n+2}(n+1)\ar[d]^{d'}\\
&\cdots\ar[r]   &C_{n+3}(n)\ar[r]^{d}	&C_{n+2}(n)\ar[r]^{d}	&C_{n+1}(n)
}\tag{2.1a}
\end{displaymath}
which is called the Grassmannian bicomplex. 
% We are reducing this calculation
%We will verify here commutativity of the above diagram for this we just need to show that $d'\circ d=d\circ d'$ for the group $C_{n+k+m}(n+k)$.
%\[d'\circ d(l_0,\ldots,l_{n+k+m-1})=\sum_{i=0}^{n+k+m-1}(-1)^i\left\lbrace \sum_{\substack{j=0\\ j\neq i}}^{n+k+m-1}(-1)^j(l_j|l_0,\ldots,\hat{l}_i,\ldots,\hat{l}_j,\ldots,l_{n+k+k+m-1})\right\rbrace\]
%and
%\[d\circ d'(l_0,\ldots,l_{n+k+m-1})=\sum_{i=0}^{n+k+m-1}(-1)^i\left\lbrace \sum_{\substack{j=0\\ j\neq i}}^{n+k+m-1}(-1)^j(l_i|l_0,\ldots,\hat{l}_i,\ldots,\hat{l}_j,\ldots,l_{n+k+m-1})\right\rbrace\]
For the following we will use a subcomplex $(C_\ast(n),d)$ called the Grassmannian complex, of the above
\[\cdots\xrightarrow{d} C_{n+3}(n)\xrightarrow{d} C_{n+2}(n)\xrightarrow{d}C_{n+1}(n)\]
We concentrate our studies to the subcomplex $(C_\ast(n),d)$, but in some cases we will also use the following subcomplex $(C_*(*),d')$ of the Grassmannian complex
\[\cdots\xrightarrow{d'} C_{n+3}(n+2)\xrightarrow{d'} C_{n+2}(n+1)\xrightarrow{d'}C_{n+1}(n)\]

\subsection{Polylogarithmic Groups}
From now on we will denote our field by $F$ and $F-\{0,1\}$ will be abbreviated as $F^{\bullet\bullet}$. In some texts $F^{\bullet\bullet}$ is also referred as doubly punctured affine line over $F$ in (\cite{PandG}). We will also denote $\varmathbb{Z}[\varmathbb{P}^1_F]$ as the free abelian group generated by $[x]$ where $x\in \varmathbb{P}^1_F$.

\textbf{Scissors congruence group:}(\cite{Sus1})The Scissors congruence group $\mathcal{B}(F)$ of $F$ is defined as the quotient of the free abelian group $\varmathbb{Z}[F^{\bullet\bullet}]$ by the subgroup generated by the elements of the form 
\[ [x]-[y]+\left[\frac{y}{x}\right]-\left[\frac{1-y}{1-x}\right]+\left[\frac{1-y^{-1}}{1-x^{-1}}\right] \text{ where } x\neq y,\hspace{3pt}x,y\neq 0,1\]
The above relation is the famous Abel's five-term relation for the dilogarithm. 
%It can also be interpreted geometrically (in terms of scissors congruences) whence its name: Consider a an ideal polyhedron hyperbolic 3-space with five vertices $x_1,\ldots,x_5$. Divide this polyhedron into five tetrahedra by leaving out one vertex at a time i.e $\{x_2,x_3,x_4,x_5\}$ and $\{x_1,x_3,x_4,x_5\}$ with common face $\{x_3,x_4,x_5\}$ and three other tetrahedra $\{x_1,x_2,x_4,x_5\}$, $\{x_1,x_2,x_3,x_5\}$ and $\{x_1,x_2,x_3,x_4\}$ so that the sum of first two volumes is same as the sum of last three volumes( when taken with the right orientation). This volumes identity is mimicked in the following relation (where $r(a,b,c,d)$ denotes the cross-ratio of four points)
%\[[r(x_2,x_3,x_4,x_5)]+[r(x_1,x_3,x_4,x_5)]=[r(x_1,x_2,x_4,x_5)]+[r(x_1,x_2,x_3,x_5)]+[r(x_1,x_2,x_3,x_4)]\]
%This relation is a version of the above five-term relation.
\subsubsection{Bloch-Suslin and Goncharov's polylog complexes}\label{b_s}
In this section we will closely follow \cite{Gonc} and \cite{Gonc1}.
\begin{enumerate}

\item \textbf{Weight 1:}
We define subgroup $R_1(F)\subset \varmathbb{Z}[\varmathbb{P}^1_F]$ by 
\[R_1(F)= \left\langle [xy]-[x]-[y], x,y\in F^\times-\{1\}\right\rangle\]
The map $\delta_1:\mathcal{B}_1(F)\rightarrow F^\times,[a]\mapsto a$ is defined as an isomorphism (see $\S$1 of \cite{Gonc}), so we have $\mathcal{B}_1(F)= F^\times$.
\item \textbf{Weight 2:}
First we define the subgroup $R_2(F)\subset \varmathbb{Z}[\varmathbb{P}^1_F\setminus \{0,1,\infty\}]$
\[R_2(F):=\left\langle \sum^4_{i=0}(-1)^i[r(x_0,\ldots,\hat{x}_i,\ldots,x_4)],\quad x_i \in \varmathbb{P}^1_F \right\rangle\]
where $r(x_0,x_1,x_2,x_3)=\frac{(x_0-x_3)(x_1-x_2)}{(x_0-x_2)(x_1-x_3)}$ is the cross-ratio of four points and $\delta_2$ is defined 
as
\[\delta_2:\varmathbb{Z}[\varmathbb{P}^1_F\setminus\{0,1,\infty\}]\rightarrow \bigwedge{}^2F^\times \]
\begin{equation*}
[x]\mapsto(1-x)\wedge x 
\end{equation*}
where $\bigwedge{}^2F^\times=F^\times\otimes_{\varmathbb{Z}}F^\times/\langle x \otimes_{\varmathbb{Z}} x| x\in F^\times\rangle$. One has $\delta_2\left(R_2(F)\right)=0$. Now we can define the free abelian group $\mathcal{B}_2(F)$ which is generated by $[x] \in \varmathbb{Z}[\varmathbb{P}^1_F\setminus \{0,1,\infty\}]$ and quotient by the subgroup $R_2(F)\subset \varmathbb{Z}[\varmathbb{P}^1_F\setminus \{0,1,\infty\}]$, i.e. \[\mathcal{B}_2(F)=\frac{\varmathbb{Z}[\varmathbb{P}^1_F\setminus \{0,1,\infty\}]}{R_2(F)}\] and we get a complex $B_F(2)$ called the Bloch-Suslin complex of $F$
\[B_F(2):\quad\quad\quad \mathcal{B}_2(F)\xrightarrow{\delta} \bigwedge{}^2F^\times\]
where first term is in degree 1 and second term in degree 2 and $\delta$ is induced from $\delta_2$ due to fact $\delta_2\left(R_2(F)\right)=0$.
\item \textbf{Weight 3:}
Consider the triple-ratio of six points $r_3\in \mathbb{Z}[\varmathbb{P}^1_F]$ which is defined as $r_3: C_6(\varmathbb{P}^2_F)\rightarrow \varmathbb{Z}[\varmathbb{P}^1_F]$, where $C_6(\varmathbb{P}^2_F)$ is a free abelian group generated by the configurations of 6 points in generic position over $\varmathbb{P}^1_F$
\[r_3(l_0,\ldots,l_5)=\text{Alt}_6\left[\frac{\Delta(l_0,l_1,l_3)\Delta(l_1,l_2,l_4)\Delta(l_2,l_0,l_5)}{\Delta(l_0,l_1,l_4)\Delta(l_1,l_2,l_5)\Delta(l_2,l_0,l_3)}\right]\]
where $l_i$ is the point in $\varmathbb{P}^2_F$, $\Delta(l_i,l_j,l_k)=\langle \omega,l_i\wedge l_j \wedge l_k\rangle$ and $\omega \in \det V^*$. Now define the relation $R_3(F) \in \varmathbb{Z}[\varmathbb{P}^1_F]$ 
\[R_3(F):=\left\langle \sum^6_{i=0}(-)^ir_3(l_0,\ldots,\hat{l}_i,\ldots,l_6)\Big|\quad (l_0,\ldots,\hat{l}_i,\ldots,l_6)\in C_6(\varmathbb{P}^2_F)\right\rangle\]
One can define $\mathcal{B}_3(F)$ as the free abelian group generated by $[x] \in \mathbb{Z}[\varmathbb{P}^1_F]$ and quotient by $R_3(F)$, $[0]$ and $[\infty]$. Thus we get the complex $B_F(3)$
\[B_F(3):\quad\quad\quad \mathcal{B}_3(F)\xrightarrow{\delta}\mathcal{B}_2(F)\otimes_{\varmathbb{Z}} F^\times \xrightarrow{\delta} \bigwedge{}^3 F^\times\]
\item \textbf{Weight $\geq3$:}
Here we will define group $\mathcal{B}_n(F)$. Suppose $\mathcal{R}_n(F)$ is defined already, we set
\[\mathcal{B}_n(F)=\frac{\varmathbb{Z}[\varmathbb{P}^1_F]}{\mathcal{R}_n(F)}\]
and the morphism
\[\delta_n:\varmathbb{Z}[\varmathbb{P}^1_F]\rightarrow\mathcal{B}_{n-1}(F)\otimes F^\times\]
\begin{equation*}
[a]\mapsto
\begin{cases}
0 &\textrm{ if }x=0,1,\infty \notag\\
[x]_{n-1}\wedge x &\textrm{ otherwise}
\end{cases}
\end{equation*}
where $[x]_n$ is class of $[x]$ in $\mathcal{B}_n(F)$. We find more important is the case for $n\geq2$, where we define 
\[\mathcal{A}_n(F)=\ker\delta_n\]
and $\mathcal{R}_n(F)\subset \mathbb{Z}[\varmathbb{P}^1_F]$ is generated by the elements $\alpha(0)-\alpha(1)$,$[\infty]$ and $[0]$, where $\alpha(t)$ runs through all the elements of $\mathcal{A}_n(F(t))$, for an indeterminate $t$.
\end{enumerate}
\begin{lem}\label{gonc1.16}(Goncharov)
For $n\geq 2$, $\mathcal{R}_n(F)\subset\ker \delta_n$
\end{lem}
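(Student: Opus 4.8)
The statement is equivalent to the assertion that $\delta_n$ descends to the quotient $\mathcal{B}_n(F)=\varmathbb{Z}[\varmathbb{P}^1_F]/\mathcal{R}_n(F)$, so the plan is to verify the containment on the three families of generators of $\mathcal{R}_n(F)$ and to argue by induction on $n$, the case $n=2$ being the classical fact that the five-term relation lies in $\ker\delta_2$. The generators $[0]$ and $[\infty]$ are immediate, since $\delta_n$ is defined to vanish on $[0],[1],[\infty]$. Thus the entire content is to show that $\delta_n^F(\alpha(0))=\delta_n^F(\alpha(1))$ whenever $\alpha(t)\in\mathcal{A}_n(F(t))=\ker\delta_n^{F(t)}$.

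First I would compare the two specializations through a specialization homomorphism on the target. Writing $\alpha(t)=\sum_i n_i[f_i(t)]$ with $f_i(t)\in\varmathbb{P}^1_{F(t)}$, the hypothesis reads $\sum_i n_i\,[f_i(t)]_{n-1}\otimes f_i(t)=0$ in $\mathcal{B}_{n-1}(F(t))\otimes F(t)^\times$. For $a\in\{0,1\}$ let $v_a$ be the valuation at $t=a$ and introduce the homomorphism $s_a\colon F(t)^\times\to F^\times$, $f\mapsto\bigl((t-a)^{-v_a(f)}f\bigr)\big|_{t=a}$ (the tame-symbol--corrected evaluation, which is multiplicative), together with the specialization $\mathcal{B}_{n-1}(F(t))\to\mathcal{B}_{n-1}(F)$ induced on classes, whose existence is guaranteed by the inductive hypothesis that $\delta_{n-1}$ already descends. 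The key step is a compatibility square asserting that the resulting map $\sigma_a$ on $\mathcal{B}_{n-1}(F(t))\otimes F(t)^\times$ satisfies $\delta_n^F(\alpha(a))=\sigma_a\bigl(\delta_n^{F(t)}(\alpha(t))\bigr)$; one checks this term by term, splitting into the case $v_a(f_i)=0$ with $f_i(a)\notin\{0,1,\infty\}$, and the degenerate cases where $f_i$ acquires the value $0$, $1$, or $\infty$ at $t=a$, in each of which the corresponding summand of $\delta_n^F(\alpha(a))$ vanishes and is matched by a summand that $\sigma_a$ sends to $0$ (because $[0]_{n-1}=[\infty]_{n-1}=0$, or because a factor becomes $1\in F^\times$). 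Granting the square, applying it to $\delta_n^{F(t)}(\alpha(t))=0$ for $a=0$ and $a=1$ gives $\delta_n^F(\alpha(0))=\delta_n^F(\alpha(1))=0$, and in particular $\alpha(0)-\alpha(1)\in\ker\delta_n^F$.

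The main obstacle is precisely the well-definedness of the specialization $\sigma_a$ on the target, and this is where the induction really bites. Naive evaluation $[g]\mapsto[g(a)]$ is not a homomorphism on $\mathcal{B}_{n-1}(F(t))$ --- already for $n-1=1$, where $\mathcal{B}_1(F(t))\cong F(t)^\times$, sending $g\mapsto g(a)$ fails to respect products once zeros and poles collide --- so the evaluation on the first tensor factor must be replaced by the correct specialization coming from the descent of $\delta_{n-1}$, and one must check that the residue (tame-symbol) contributions appearing when some $f_i$ degenerates at $t=a$ either vanish or cancel. It is exactly here that the difference $\alpha(0)-\alpha(1)$ in the definition of $\mathcal{R}_n(F)$ plays its role: any ambiguity in $\sigma_a$ coming from the leading-coefficient (constant-field) part of $F(t)^\times$ is independent of $a$ and therefore cancels between the two specializations. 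I would organize the induction so that the $(n-1)$-th case of the lemma is used twice: once to define $\sigma_a$ on the factor $\mathcal{B}_{n-1}(F(t))$, and once, through the relation $\delta_{n-1}\circ\delta_n=0$, to control the residual terms; the base case $n=2$ is then the explicit verification that $\delta_2$ annihilates the five-term relation in $\bigwedge^2 F^\times$.
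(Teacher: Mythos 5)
First, note that the paper does not prove this lemma at all: its ``proof'' is the citation ``See lemma 1.16 of \cite{Gonc}'', so your proposal has to be judged against the argument in Goncharov's paper, which is in essence the specialization argument you outline (evaluate the identity $\delta_n^{F(t)}(\alpha(t))=0$ at $t=a$ through a specialization map and conclude $\delta_n^F(\alpha(a))=0$ for $a=0,1$). Your reduction to the generators, the tame-symbol--corrected evaluation on the factor $F(t)^\times$, and the case analysis of degenerate summands $f_i(a)\in\{0,1,\infty\}$ are all correct ingredients of that argument.

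The genuine gap is the step you yourself call the main obstacle, and your resolution of it is circular. You claim the specialization $\mathcal{B}_{n-1}(F(t))\to\mathcal{B}_{n-1}(F)$ ``is guaranteed by the inductive hypothesis that $\delta_{n-1}$ already descends.'' It is not. Descent of $\delta_{n-1}$ says only that $\mathcal{R}_{n-1}(K)\subset\ker\delta_{n-1}^{K}$ for the fields $K=F,F(t)$; what you need is the different statement that evaluation at $t=a$ carries $\mathcal{R}_{n-1}(F(t))$ into $\mathcal{R}_{n-1}(F)$. Since the generators of $\mathcal{R}_{n-1}(F(t))$ are $[0]$, $[\infty]$ and $\beta(0)-\beta(1)$ with $\beta(s)\in\mathcal{A}_{n-1}\bigl(F(t)(s)\bigr)$ for a second indeterminate $s$, this amounts to showing that specialization in $t$ maps $\mathcal{A}_{n-1}\bigl(F(t)(s)\bigr)$ into $\mathcal{A}_{n-1}\bigl(F(s)\bigr)$ --- which is exactly your compatibility square one level down, but over the larger base field $F(s)$, and so is not available from the induction as you have organized it (you induct only on the statement of the lemma, over the fixed field $F$). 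The fix is to strengthen the induction: prove simultaneously, for \emph{every} field $K$, (a) $\mathcal{R}_n(K)\subset\ker\delta_n^K$, and (b) for each point $a$, specialization induces a well-defined homomorphism $\mathcal{B}_n(K(t))\to\mathcal{B}_n(K)$ fitting into your square; then statement (b) at level $n-1$, applied over the bases $K$ and $K(s)$, yields both (a) and (b) at level $n$, and your second paragraph becomes rigorous. Two smaller points: the base case $n=2$ is not the five-term-relation computation (that concerns the cross-ratio presentation $R_2(F)$, not the inductively defined $\mathcal{R}_2(F)$), but the same specialization argument with $\mathcal{B}_1(F(t))=F(t)^\times$, where (b) is just your tame-corrected evaluation; and your closing appeal to cancellation ``between the two specializations'' is unnecessary and misleading, since the argument shows that $\delta_n^F(\alpha(0))$ and $\delta_n^F(\alpha(1))$ vanish separately.
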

Proof: See lemma 1.16 of \cite{Gonc}.\hfill $\Box$

Goncharov defines the following complex (\cite{Gonc},\cite{Gonc1}) for the group $\mathcal{B}_n(F)$.
\begin{align}\label{Gonc_comp}
\mathcal{B}_n(F)\xrightarrow{\delta}\mathcal{B}_{n-1}\otimes F^\times\xrightarrow{\delta}\mathcal{B}_{n-2}\otimes \bigwedge{}^2F^\times\xrightarrow{\delta}\cdots\xrightarrow{\delta}\mathcal{B}_2(F)\bigwedge{}^{n-2}F^\times\xrightarrow{\delta}\frac{\bigwedge{}^nF^\times}{2-\text{torsion}}
\end{align}

%Here one can define group $B_n(F)$ as the first cohomology group of the above complex.
%\[B_n(F)=\frac{\mathcal{A}_n(F)}{\mathcal{R}_n(F)}\]
%so $B_n(F)\subset \mathcal{B}_n(F)$

\subsection{Infinitesimal Complexes (Cathelineau's Complexes)}\label{cat_comp}
\indent There are two versions of infinitesimal complex or infinitesimal groups. In the literature the first one was introduced by Cathelineau \cite{Cath2} while the other version was introduced by Bloch-Esnault \cite{BandE1} also called ``additive``. The latter version is beyond the  scope of this text we will discuss here only the former one.

Cathelineau (\cite{Cath2},\cite{Cath1}) has defined the group ( in fact an $F$-vector space) as an infinitesimal analogue of Goncharov's groups $\mathcal{B}_n(F)$ as follows 
\begin{enumerate}

\item We define $\beta_1(F)=F$

\item One can define $\beta_2(F)$ as
\[\beta_2(F)=\frac{F[F^{\bullet\bullet}]}{r_2(F)}\]
where $r_2(F)$ is the kernel of the map
\[\partial_2: F[F^{\bullet\bullet}]\rightarrow F\otimes_{\varmathbb{Z}} F^\times\]
\[[a]\mapsto a\otimes_{F} a+(1-a)\otimes_{\varmathbb{Z}}(1-a)\]
Cathelineau \cite{Cath2} has shown that $r_2(F)$ is given as the subvector space of $F[F^{\bullet\bullet}]$ spanned by the elements
\[[a]-[b]+a\left[\frac{b}{a}\right]+(1-a)\left[\frac{1-b}{1-a}\right], a,b \in F^{\bullet\bullet}, a\neq b,\]
hence passing to the quotient by $r_2(F)$ we obtain the complex
\begin{align}\label{catcomp2}
\beta_2(F)\xrightarrow{\partial}&F\otimes_{\varmathbb{Z}} F^\times
\end{align}
\[\partial:\langle a\rangle_2\mapsto a\otimes a+(1-a)\otimes (1-a)\]
\item For $n\geq 3$, the $F$-vector space $\beta_n(F)$ is defined as
\[\beta_n(F)=\frac{F[F^{\bullet\bullet}]}{r_n(F)}\]
where $r_n(F)$ is kernel of the map
\[\partial_n:F[F^{\bullet\bullet}]\rightarrow \left(\beta_{n-1}(F)\otimes F^\times\right)\oplus\left(F\otimes\mathcal{B}_{n-1}(F) \right)\]
\[ [a]\mapsto \langle a\rangle_{n-1}\otimes a+(1-a)\otimes[a]_{n-1} \]
where $\langle a\rangle_k$ is the class of $[a]$ in $\beta_k(F)$ and $[a]_k$ is the class of $[a]$ in $\mathcal{B}_k(F)$.
For $n=2$, we have the following complex of $F$-vector spaces.
\begin{align}\label{catcomp3}
\beta_3(F)\xrightarrow{\partial}(\beta_2(F)\otimes F^\times)\oplus&(F\otimes \mathcal{B}_2(F))\xrightarrow{\partial}F\otimes\bigwedge{}^2F^\times
\end{align}
where
\[\partial:\langle a\rangle_3\mapsto \langle a\rangle_2\otimes a+(1-a)\otimes[a]_2\]
\[\partial:\langle a\rangle_2\otimes b+x\otimes [y]_2\mapsto -\left(a\otimes a\wedge b+(1-a)\otimes(1-a)\wedge b\right)+x\otimes (1-y)\wedge y\]
\end{enumerate}
Before the following lemma we shall introduce K\"{a}hler differentials (see $\S$25 in \cite{Mats} and $\S$26 in \cite{Mats1}). First, recall the definition of a derivation map $D\in Der(A,M)$ for a ring $A$ and an $A$-module $M$ is $D:A\rightarrow M$ and this map satisfies $D(a+b)=D(a)+D(b)$ and $D(ab)=aD(b)+bD(a)$. Now an $A$-module $\Omega_{A/F}$ is generated by $\{da{}|{}a\in A\}$ so that the uniqueness of a linear map $f:\Omega_{A/F}\rightarrow M$ satisfying  $D=f\circ d$ is obvious (see p192 of \cite{Mats}). If $a\in A$ then the element $da\in\Omega_{A/F}$ and called the differential of $a$ and the $A$-module $\Omega_{A/F}$ is called the module of K\"{a}hler differentials.
\begin{lem}\label{catexa}(Cathelineau \cite{Cath1},\cite{Cath2})
The complexes \ref{catcomp2} and \ref{catcomp3} are quasi-isomorphic to $\Omega^i_F$ through the maps $d\log:\bigwedge^iF^\times\rightarrow \Omega^i_F$ so that the following sequences 
\[0\rightarrow\beta_2(F)\xrightarrow{\partial}F\otimes F^\times \xrightarrow{d\log}\Omega^1_F\rightarrow 0\]
\[0\rightarrow\beta_3(F)\xrightarrow{\partial}(\beta_2(F)\otimes F^\times)\oplus(F\otimes B_2(F))\xrightarrow{\partial}F\otimes\wedge^2F^\times\xrightarrow{d\log}\Omega_F^2\rightarrow 0\]
are exact. Here $\Omega^i_F$ is the vector space of K\"{a}hler differential with the respective definitions of $d\log$ as $d\log(a\otimes b)=a\frac{db}{b}$ and $d\log(a\otimes b\wedge c)=a\frac{db}{b}\wedge\frac{dc}{c}$.
\end{lem}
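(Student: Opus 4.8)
The plan is to prove the two augmentation sequences exact by isolating the single substantial point---exactness in the interior---from the routine bookkeeping. First I would verify that the displayed sequences are genuine complexes. In weight $2$ this is the one-line check $d\log(\partial\langle a\rangle_2)=da+(1-a)\frac{-da}{1-a}=0$; in weight $3$ one checks just as mechanically that $\partial\circ\partial=0$ (the two contributions $\pm(1-a)\otimes(1-a)\wedge a$ cancel, using $a\wedge a=0$) and that $d\log\circ\partial=0$ (each $2$-form collapses, the last term vanishing because $dy\wedge dy=0$). Exactness at the outer terms is then cheap: $\partial$ is injective on $\beta_n(F)$ because by construction $\beta_n(F)=F[F^{\bullet\bullet}]/\ker\partial_n$, so the first isomorphism theorem applies; and $d\log$ is surjective because it already hits the generators of $\Omega^i_F$, namely $da=d\log(a\otimes a)$ and $da\wedge db=d\log(a\otimes a\wedge b)$ for $a,b\in F^\times$.

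The heart of the matter, and the step I expect to be the main obstacle, is exactness in the middle, i.e. the identity $\ker(d\log)=\mathrm{im}(\partial)$. For weight $2$ I would prove it not by computing the (very large) kernel directly but by splitting the surjection $\overline{d\log}\colon M\to\Omega^1_F$, where $M=(F\otimes F^\times)/\mathrm{im}(\partial)$; a splitting forces $\overline{d\log}$ to be injective, which is exactly the missing inclusion. By the universal property of K\"ahler differentials an $F$-linear section $s\colon\Omega^1_F\to M$ is the same datum as a $\mathbb{Z}$-derivation $F\to M$, and the only candidate compatible with $s\circ\overline{d\log}=\mathrm{id}_M$ is $b\mapsto[b\otimes b]$. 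The Leibniz rule for this map holds identically, since $[bc\otimes c]+[bc\otimes b]=[bc\otimes bc]$; the only thing to verify is additivity, and here the defining four-term relations of $r_2(F)$ enter in a precise way: writing $t=b/(b+c)$ one finds $b\otimes(b+c)+c\otimes(b+c)-b\otimes b-c\otimes c=-(b+c)\bigl(t\otimes t+(1-t)\otimes(1-t)\bigr)$, which lies in $\mathrm{im}(\partial)$ (the degenerate cases where $b$, $c$ or $b+c$ vanishes being immediate). Thus $s$ is well defined and splits $\overline{d\log}$, and exactness follows.

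For weight $3$ I would bootstrap from the weight $2$ case rather than start over. Exactness at $F\otimes\bigwedge^2F^\times$ can be obtained by an analogous splitting of $d\log\colon F\otimes\bigwedge^2F^\times\to\Omega^2_F=\bigwedge^2_F\Omega^1_F$, the section being built from the weight $2$ section by wedging, so that additivity again reduces to the weight $2$ Cathelineau relation. The genuinely delicate exactness is at the interior term $(\beta_2(F)\otimes F^\times)\oplus(F\otimes\mathcal B_2(F))$, where I must show $\ker\partial=\mathrm{im}\,\partial$. I would treat this by a diagram chase that plays the weight $2$ sequence tensored with $F^\times$ against the Bloch--Suslin data $\delta\colon\mathcal B_2(F)\to\bigwedge^2F^\times$ and $\partial\colon\beta_2(F)\to F\otimes F^\times$. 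The hard part is that the two summands are not independent but are glued by the cross-term $x\otimes(1-y)\wedge y$ in the differential, so one must check that a cycle whose $\beta_2(F)\otimes F^\times$-component is killed by weight $2$ exactness and whose $F\otimes\mathcal B_2(F)$-component is killed by the five-term relation can be lifted \emph{simultaneously} to a single class in $\beta_3(F)$. Managing this coupling, rather than the two exactness statements in isolation, is where the combinatorial care is required.
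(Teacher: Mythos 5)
You should first note what you are up against here: the paper offers no proof of this lemma at all --- it is imported as a black box from Cathelineau (\cite{Cath1}, \cite{Cath2}) --- so your attempt has to be measured against Cathelineau's original argument, and it must stand on its own. A substantial part of it does. The weight-$2$ sequence is handled completely and correctly: the complex identities, injectivity of $\partial$ on $\beta_2(F)$ (immediate from the definition $r_2(F)=\ker\partial_2$), surjectivity of $d\log$, and, crucially, middle exactness via a left inverse of the induced map $\overline{d\log}$ obtained from the universal property of K\"ahler differentials applied to the derivation $b\mapsto[b\otimes b]$. Your additivity identity $(b+c)\otimes(b+c)-b\otimes b-c\otimes c=-(b+c)\,\partial\langle b/(b+c)\rangle_2$ is correct, the Leibniz rule holds already before passing to the quotient, and the evaluation $s(\overline{d\log}(b\otimes c))=\frac{b}{c}[c\otimes c]=[b\otimes c]$ closes the argument; this is in substance Cathelineau's ``multiplicative presentation'' of $\Omega^1_F$. (Two small repairs: the degenerate case $b+c=0$ needs the observation that $b\otimes b+(-b)\otimes(-b)=b\otimes(-1)$ is $2$-torsion and hence vanishes in the $F$-vector space $F\otimes_{\mathbb{Z}}F^\times$; and the surjectivity generator in weight $3$ should read $da\wedge db=d\log(ab\otimes a\wedge b)$, though $F$-linearity makes your version sufficient.) Your plan for exactness at $F\otimes\bigwedge^2F^\times$ --- wedging two copies of the weight-$2$ section --- also goes through, because the pairing $(a\otimes b,\,c\otimes e)\mapsto ac\otimes b\wedge e$ kills $\mathrm{im}(\partial_2)$ in each slot precisely by means of the $\beta_2(F)\otimes F^\times$-part of the weight-$3$ differential.

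The genuine gap is at the interior term $(\beta_2(F)\otimes F^\times)\oplus(F\otimes\mathcal{B}_2(F))$, and it is not a deferrable detail: it is the theorem. With the paper's definition $\beta_3(F)=F[F^{\bullet\bullet}]/\ker\partial_3$, what must be shown is that the $F$-span of the elements $\langle a\rangle_2\otimes a+(1-a)\otimes[a]_2$ exhausts $\ker\partial$. Your proposal replaces this by the announcement of a ``diagram chase'' playing the weight-$2$ sequence tensored with $F^\times$ against the Bloch--Suslin data, and your own closing sentence explains why such a chase has no starting point: because of the cross-term $x\otimes(1-y)\wedge y$, the two components of a cycle are not separately cycles for any uncoupled differentials, so weight-$2$ exactness and the five-term relation produce nothing to lift, let alone to glue compatibly. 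This exactness does not follow formally from the weight-$2$ case together with the defining relations of $\mathcal{B}_2(F)$; Cathelineau's published proof of precisely this step is the substantive part of his paper and rests on nontrivial input (differentiation of functional equations combined with Suslin-type results on the Bloch group and the homology of linear groups), which is presumably why the present paper cites the statement rather than reproving it. So as it stands your attempt establishes the first sequence and the two outer exactness statements of the second, but the heart of the second sequence remains unproved.
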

\subsubsection{Functional equations in $\beta_2(F)$}
Here we will mainly focus on the work in (\cite{PandG})
\begin{enumerate}
\item The two-term relation
\[\langle a\rangle_2=\langle 1-a\rangle_2\]
\item The inversion relation.
\[\langle a\rangle_2=-a\left\langle\frac{1}{a}\right\rangle_2\]
\item The distribution relation
\[\langle a^m\rangle_2=\sum_{\zeta^m=1}\frac{1-a^m}{1-\zeta a}\langle \zeta a\rangle_2\]
\item The four-term relation in $F[F^{\bullet\bullet}]$.
\begin{equation}\label{catfour}
\langle a\rangle_2-\langle b\rangle_2+a\left\langle \frac{b}{a}\right\rangle_2+(1-a)\left\langle \frac{1-b}{1-a}\right\rangle_2=0,\quad a\neq b\notag
\end{equation}
The above equation is an infinitesimal version of the famous five-term relation and it can be deduced directly from the following form of five term relation \cite{Sus1}.
\end{enumerate}
\[[a]_2-[b]_2+\left[\frac{b}{a}\right]_2-\left[\frac{1-b}{1-a}\right]_2+\left[\frac{1-\frac{1}{b}}{1-\frac{1}{a}}\right]_2=0\]
%If $*:F[F^{\bullet\bullet}]\rightarrow F[F^{\bullet\bullet}]$ is defined as $*(a)=a(1-a)$ then we compute the coefficient of $*(x)$ where $x\in F[F^{\bullet\bullet}]$.
%For first term
%\[(a)'=a(1-a)=1\cdot *(a)=1\cdot\langle a\rangle_2\]
%for second term 
%\[(b)'=b(1-b)=1\cdot *(b)=1\cdot\langle a\rangle_2\]
%for third term
%\begin{align*}
%\left(\frac{b}{a}\right)'=&\frac{ab'-ba'}{a^2}=\frac{ab(1-b)-ba(1-a)}{a^2}\\
%=&a\cdot \frac{b}{a}\left(1-\frac{b}{a}\right)=a\cdot *\left(\frac{b}{a}\right)=a\cdot\left\langle \frac{b}{a}\right\rangle_2
%\end{align*}
%for fourth term
%\begin{align*}
%\left(\frac{1-b}{1-a}\right)'=&\frac{(1-a)(1-b)'-(1-b)(1-a)'}{(1-a)^2}\\
%=&\frac{(1-a)b(1-b)-(1-b)a(1-a)}{(1-a)^2}\\
%=&-(1-a)\cdot\left(\frac{1-b}{1-a}\right)\left(1-\frac{1-b}{1-a}\right)\\
%=&-(1-a)\cdot *\left(\frac{1-b}{1-a}\right)=-(1-a)\left\langle\frac{1-b}{1-a}\right\rangle_2
%\end{align*}
%for fifth term 
%\begin{align*}
%\left(\frac{1-\frac{1}{b}}{1-\frac{1}{a}}\right)'=&\frac{\left(1-\frac{1}{a}\right)\cdot\left(1-\frac{1}{b}\right)'-\left(1-\frac{1}{b}\right)\cdot \left(1-\frac{1}{a}\right)'}{\left(1-\frac{1}{a}\right)^2}\\
%=&\frac{\left(1-\frac{1}{a}\right)\cdot\left(1-\frac{1}{b}\right)-\left(1-\frac{1}{b}\right)\cdot \left(1-\frac{1}{a}\right)}{\left(1-\frac{1}{a}\right)^2}\\
%=&0
%\end{align*}
%This is usual way to get four-term relation but later (see example \ref{four_term_ex} in next chapter) we will use the derivation technique to get same relation.

\subsubsection{Functional equation in $\beta_3(F)$}
Here as well we will mainly focus on the work of (\cite{PandG})
\begin{enumerate}
\item The three-term relation.
\begin{equation}\label{threeterm}
\langle 1-a \rangle_3-\langle a\rangle_3+a\left\langle 1-\frac{1}{a}\right\rangle_3=0\notag
\end{equation}
\item The inversion relation.
\begin{align}
\langle a\rangle_3=-a\left\langle \frac{1}{a}\right\rangle_3\notag
\end{align}
The inversion relation is a consequence of the three-term relation (\ref{threeterm}) (see lemma 3.11 of \cite{PandG}).

\item The distribution relation
\[\langle a^m\rangle_3=m\sum_{\zeta^m=1}\frac{1-a^m}{1-\zeta a}\langle \zeta a\rangle_3\]
\item The 22-term relation.(\cite{PandG})

There are number of ways to write it and one of them is the following.
\begin{align}\label{22_term}
&c\langle a\rangle_3 -c\langle b \rangle_3+(a-b+1)\langle c \rangle_3\notag\\
+&(1-c)\langle 1-a\rangle_3-(1-c)\langle1-b\rangle_3+(b-a)\langle1-c\rangle_3\notag\\
-&a\left\langle \frac{c}{a}\right\rangle_3+b\left\langle \frac{c}{b}\right\rangle_3+ca\left\langle \frac{b}{a}\right\rangle_3\notag\\
-&(1-a)\left\langle \frac{1-c}{1-a}\right\rangle_3+(1-b)\left\langle \frac{1-c}{1-b}\right\rangle_3+c(1-a)\left\langle \frac{1-b}{1-a}\right\rangle_3\notag\\
+&c(1-a)\left\langle \frac{a(1-c)}{c(1-a)}\right\rangle_3-c(1-b)\left\langle \frac{b(1-c)}{c(1-b)}\right\rangle_3-b\left\langle\frac{ca}{b}\right\rangle_3\notag\\
+&(1-c)a\left\langle \frac{a-b}{a}\right\rangle_3+(1-c)(1-a)\left\langle \frac{b-a}{1-a}\right\rangle_3\notag\\
-&(a-b)\left\langle \frac{(1-c)a}{a-b}\right\rangle_3-(1-b)\left\langle \frac{c(1-a)}{1-b}\right\rangle_3\notag\\
-&(b-a)\left\langle \frac{(1-c)(1-a)}{b-a}\right\rangle_3+c(a-b)\left\langle \frac{(1-c)b}{c(a-b)}\right\rangle_3\notag\\
+&c(b-a)\left\langle\frac{(1-c)(1-b)}{c(b-a)}\right\rangle_3=0\notag
\end{align}
\end{enumerate}

\subsection{Derivation in $F$-vector space}\label{def1}
Let $F$ be a field and $D \in Der_{\varmathbb{Z}}(F,F)$ be an absolute derivation, (see $\S$25 of \cite{Mats} and $\S$6 of \cite{PandG}) we will also write simply as $D \in Der_{\varmathbb{Z}}(F)$. For example if $x\in F$ then its derivative over $\varmathbb{Z}$ will be represented by $D(x)$ and will be an element of $F$ as well.

According to $\S$6.1 in \cite{PandG} we have $\tilde{f}_D:\mathbb{Z}[F]\rightarrow F[F^{\bullet\bullet}], [a]\mapsto\frac{D(a)}{a(1-a)}[a]$ induces a map
\[\tau_{2,D}:\mathcal{B}_2(F)\rightarrow\beta_2(F),[a]_2\mapsto \frac{D(a)}{a(1-a)}\langle a\rangle_2\]
We define an $F$-vector space $\beta^D_2(F)$ generated by $\llbracket a\rrbracket^D$ for $a\in F^{\bullet\bullet}$ and subject to the five-term relation 
\[\llbracket a\rrbracket^D-\llbracket b\rrbracket^D+\left\llbracket \frac{b}{a}\right\rrbracket^D-\left\llbracket \frac{1-b}{1-a}\right\rrbracket^D+\left\llbracket \frac{1-b^{-1}}{1-a^{-1}}\right\rrbracket^D\text{ where } a\neq b,\quad 1-a\neq 0,\]
where  $\llbracket a\rrbracket^D:=\frac{D(a)}{a(1-a)}[a]$ and $[a]\in F[F^{\bullet\bullet}]$. Furthermore, we have
\[\partial^D_2:F[F^{\bullet\bullet}]\rightarrow F\otimes F^\times\]
with
\[\partial^D_2:\llbracket a\rrbracket^D\mapsto -D\log(1-a)\otimes a+D\log(a)\otimes (1-a),\]
where $D\log a=\frac{D(a)}{a}$. We identify Im$(\tau_{2,D})\left(\subset\beta_2(F)\right)$ with $\beta_2^D(F)$. We can also write a variant of Cathelineau's complex by using the $F$-vector space \[\beta^D_2(F)\subset F[F^{\bullet\bullet}]/(\text{five-term relation}),\]
as
\[\beta^D_2(F)\xrightarrow{\partial^D}F\otimes F^\times \]
with
\[\partial^D:\llbracket a\rrbracket^D_2\mapsto -D\log(1-a)\otimes a+D\log(a)\otimes (1-a)\]
where $\llbracket a\rrbracket^D_2=\frac{D(a)}{a(1-a)}\langle a\rangle_2$.

We also want to define $F$-vector spaces $\beta_n^D(F)$ for $n\geq3$. For this we use a slightly different construction by Cathelineau which in the case $n=2$ gives his $\textbf{b}_2(F)$ (see \cite{Cath2}). For this he divides $F[F^{\bullet\bullet}]$ by the kernel of the map $\partial_2$, of which an important element is the Cathelineau's four-term relation. By Remark \ref{fifth} below the differential of the five-term relation in $\mathcal{B}_2(F)$ leads to Cathelineau's four-term relation. For later purpose we note that the differential of Goncharov's 22-term relation in $\mathcal{B}_3(F)$ vanishes in $\beta_3(F)$ for any $D\in Der_{\varmathbb{Z}}(F)$ (see Proposition 6.10 of \cite{PandG}). We define
\[\beta^D_3(F)=\frac{F[F^{\bullet\bullet}]}{\rho^D_3(F)}\]
where $\rho^D_3(F)$ is the kernel of the map
\[\partial^D_3:\llbracket a \rrbracket^D\mapsto \llbracket a \rrbracket^D_2\otimes a + D\log(a)\otimes [a]_2\]
Now we have an $F$-vector space $\beta_2^D(F)$ which is an intermediate stage between a $\varmathbb{Z}$-module $\mathcal{B}_2(F)$ and an $F$-vector space $\beta_2(F)$ and has two-term and inversion relations same as $\mathcal{B}_2(F)$.
\subsection{Functional Equations in $\beta^D_2(F)$}\label{func_beta}
The inversion and two-term relations in $\beta^D_2(F)$ are quite similar to group $\mathcal{B}_2(F)$.

1. Two-term relation: 
\[\llbracket a\rrbracket^D_2=-\llbracket 1-a\rrbracket^D_2\]
% We are reducing calculation here
%We know from Cathelineau's $F$-vector space $\beta_2(F)$.
%\begin{align}
%\langle a\rangle_2&=\langle 1-a\rangle_2\notag\\
%\frac{D(a)}{a(1-a)}\langle a\rangle_2&=\frac{D(a)}{a(1-a)}\langle 1-a\rangle_2\notag\\
%\frac{D(a)}{a(1-a)}\langle a\rangle_2&=-\frac{D(1-a)}{(1-a)\{1-(1-a)\}}\langle 1-a\rangle_2\notag\\
%\llbracket a\rrbracket^D_2&=-\llbracket 1-a\rrbracket^D_2\notag
%\end{align}
2. Inversion relation: 
\[\llbracket a\rrbracket^D_2=-\left\llbracket \frac{1}{a}\right\rrbracket^D_2\]
% We are reducing calculations here
%The inversion relation in $\beta_2(F)$ is
%\begin{align}
%\left \langle a\right\rangle_2=&-a\left\langle \frac{1}{a}\right\rangle_2\notag\\
%\frac{D(a)}{a(1-a)}\left\langle a\right\rangle_2=&\frac{D(a)}{a(1-a)}\cdot-a\left\langle\frac{1}{a}\right\rangle_2\notag\\
%\frac{D(a)}{a(1-a)}\left\langle a\right\rangle_2=&\frac{\frac{1}{a^2}D(a)}{\frac{1}{a}\left(1-\frac{1}{a}\right)}\left\langle\frac{1}{a}\right\rangle_2\notag\\
%\frac{D(a)}{a(1-a)}\left\langle a\right\rangle_2=&-\frac{D\left(\frac{1}{a}\right)}{\frac{1}{a}\left(1-\frac{1}{a}\right)}\left\langle \frac{1}{a}\right\rangle_2\notag\\
%\llbracket a\rrbracket^D_2=&-\left\llbracket \frac{1}{a}\right\rrbracket^D_2\notag
%\end{align}

3. The five-term relation:
\[\llbracket a\rrbracket^D_2-\llbracket b\rrbracket^D_2+\left\llbracket \frac{b}{a}\right\rrbracket^D_2-\left\llbracket \frac{1-b}{1-a}\right\rrbracket^D_2+\left\llbracket \frac{1-b^{-1}}{1-a^{-1}}\right\rrbracket^D_2=0\]
% and $D=a(1-a)\frac{\partial}{\partial a}+b(1-b)\frac{\partial}{\partial a}$ acts as a partial derivative then the above relation is same as (\ref{catfour}). 
\begin{rem}\label{fifth}
If we use the definition of $\llbracket a\rrbracket^D_2$ for certain $D\in Der_{\varmathbb{Z}}(F)$,i.e., $D=a(1-a)\frac{\partial}{\partial a}+b(1-b)\frac{\partial}{\partial b} \in Der_{\varmathbb{Z}}(F,F)$ where $\frac{\partial}{\partial a}$ and $\frac{\partial}{\partial b}$ are the usual partial derivatives then we see that $\left\llbracket \frac{1-b^{-1}}{1-a^{-1}}\right\rrbracket^D_2=0$. This is how Cathelineau arrived at his four-term relation \end{rem}

\section{Infinitesimal complexes}\label{inf_comp}
There are some homomorphisms which relate Bloch-Suslin and Goncharov's complexes to Grassmannian complex(\cite{Gonc},\cite{Gonc1},\cite{Gonc2}). This section will relate variant of Cathelineau's infinitesimal complex to the geometric configurations of Grassmannian complex. We will suggest here some suitable maps for this relation and then will verify the commutativity of the underlying diagrams. Goncharov used $K$-theory to prove the commutativity of his diagram in which he related his complex with the Grassmannian complex (see appendix of \cite{Gonc4}) but here we are giving proof of the commutativity of diagram (\ref{bicomp2})(see $\S$\ref{add_tri} below) without using $K$-theory we shall use combinatorial techniques with the rewriting of triple ratio into a product of two cross-ratios.
%The same technique can also be used in Goncharov's case (see appendix \ref{Gonc_app}).

Throughout this section we will work with modulo 2-torsion and use $D\in Der_{\varmathbb{Z}}F$ as an absolute derivation for a field $F$. For $\S$\ref{add_dil} determinant $\Delta$ is defined as $\Delta(l_i,l_j):=\langle \omega,l_i\wedge l_j\rangle$, for $l_i,l_j\in V_2$, where $\omega \in \det V^*_2$ is volume form in $V_2$. For $\S$\ref{add_tri} determinant $\Delta$ is defined as $\Delta(l_i,l_j,l_k):=\langle \omega,l_i\wedge l_j\wedge l_k\rangle$ for $l_i,l_j,l_k\in V_3$, where $\omega \in \det V^*_3$ is volume form in $V_3$.
\subsection{Infinitesimal Dilogarithm}\label{add_dil}
Let $C_m(2)$ (or $C_m(\varmathbb{P}^1_F)$) be the free abelian group generated by the configurations of $m$ vectors in a two dimensional vector space $V_2$ over a field $F$ (or $m$ points in $\varmathbb{P}_F^1$) in generic position. Configurations of $m$ vectors in vector space $V_2$ are 2-tuples of vectors modulo GL$_2(V_2)$-equivalence. Grassmannian subcomplex (see diagram \ref{Gonbicomp} in $\S$\ref{Gra_comp}) for this case is the following.
\begin{equation}
\cdots\xrightarrow{d}C_5(2)\xrightarrow{d}C_4(2)\xrightarrow{d}C_3(2) \notag
\end{equation}
\begin{equation}
d\colon(l_0,\ldots,l_{m-1})\mapsto\sum_{i=0}^m (-1)^i(l_0,\ldots,\hat{l_i},\ldots,l_{m-1})\notag
\end{equation}
We will outline the procedure initially for $V_2$ and then will proceed further for $V_3$. We will also use the process of derivation (see $\S$\ref{def1}) in combination with cross-ratio to define our maps. 

Consider the following diagram
\begin{displaymath}\label{bicomplex1}
\xymatrix{
C_5(2)\ar[r]^{d}      & C_4(2)\ar[r]^d\ar[d]^{\tau_1^{2}}        & C_3(2)\ar[d]^{\tau_0^{2}}\\
                     & \beta^D_2(F)\ar[r]^{\partial^D}              & F\otimes F^\times }\tag{3.1a}\
\end{displaymath}
where $\beta^D_2(F)$ and $\partial^D$ are defined in $\S$\ref{def1}, we define 
\begin{align}\label{todef}
\tau_0^{2}\colon(l_0,l_1,l_2)\mapsto &\sum_{i=0}^2 \frac{D\{\Delta(l_i,l_{i+2})\}}{\Delta(l_i,l_{i+2})}\otimes \Delta(l_i,l_{i+1})\notag\\&\quad-\frac{D\{\Delta(l_{i+1},l_i)\}}{\Delta(l_{i+1},l_i)}\otimes \Delta(l_i,l_{i+2})\}\quad \text{      $i$ mod 3}
\end{align}
Note: The above can also be written as:
\[\sum_{i=0}^2 \frac{D\{\Delta(l_i,l_{i+2})\}}{\Delta(l_i,l_{i+2})}\otimes\frac{\Delta(l_i,l_{i+1})}{\Delta(l_{i-1},l_{i+1})},\hspace{2pt}i\text{     mod } 3.\]
Furthermore, we put  
\begin{equation}\label{t1def}
\tau_1^{2}\colon (l_0,\ldots,l_3)\mapsto \llbracket r(l_0,\ldots,l_3)\rrbracket^D_2\notag
\end{equation}
where $\llbracket a \rrbracket^D_2=\frac{D(a)}{a(1-a)}\langle a\rangle$ (defined in $\S$\ref{def1}) and  $r(l_0,\ldots,l_3)=\frac{\Delta(l_0,l_3)\Delta(l_1,l_2)}{\Delta(l_0,l_2)\Delta(l_1,l_3)}$ is the cross ratio of the points $(l_0,\ldots,l_3) \in C_4(\varmathbb{P}^1_F)$(defined in $\S$\ref{b_s}) and following is the cross-ratio identity for $(l_0,\ldots,l_3)\in C_4(2)$.
\begin{equation}\label{2did}
\Delta(l_0,l_1)\Delta(l_2,l_3)=\Delta(l_0,l_2)\Delta(l_1,l_3)-\Delta(l_0,l_3)\Delta(l_1,l_2)
\end{equation}

To ensure well-definedness of our homomorphisms $\tau_0^2$ and $\tau_1^2$ above, we first show that the definition is independent of length of the vectors and volume form $\omega$. Here are some results for verification.
\begin{lem}
$\tau_0^{2}$ is independent of the volume form $\omega$ by the vectors in $V_2$.
\end{lem}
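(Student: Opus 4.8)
The plan is to use that any two volume forms on the two-dimensional space $V_2$ differ by a nonzero scalar: if $\omega'=\lambda\omega$ with $\lambda\in F^\times$, then $\Delta'(l_i,l_j)=\langle\omega',l_i\wedge l_j\rangle=\lambda\,\Delta(l_i,l_j)$ for all $i,j$. So it is enough to show that the value $\tau_0^2(l_0,l_1,l_2)\in F\otimes F^\times$ is unchanged when every determinant $\Delta(l_i,l_j)$ is replaced by $\lambda\,\Delta(l_i,l_j)$, working modulo $2$-torsion as stipulated throughout the section.

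First I would rewrite $\tau_0^2$ in logarithmic form. Writing $\Delta_{ij}=\Delta(l_i,l_j)$ and using $\Delta_{ji}=-\Delta_{ij}$ together with $D\log(-x)=\frac{D(-x)}{-x}=\frac{D(x)}{x}=D\log x$, the defining formula reads directly as
\[
\tau_0^2(l_0,l_1,l_2)=\sum_{i=0}^{2}\bigl(D\log\Delta_{i,i+2}\otimes\Delta_{i,i+1}-D\log\Delta_{i,i+1}\otimes\Delta_{i,i+2}\bigr),\qquad i \bmod 3.
\]
The two structural facts I will exploit are the additivity $D\log(\lambda\Delta_{ij})=D\log\lambda+D\log\Delta_{ij}$ in the left (additive) slot and the multiplicativity $u\otimes(\lambda\Delta_{ij})=u\otimes\lambda+u\otimes\Delta_{ij}$ in the right (multiplicative) slot of $F\otimes_{\varmathbb{Z}}F^\times$.

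Substituting $\Delta_{ij}\mapsto\lambda\Delta_{ij}$ and expanding by bilinearity, the transformed expression equals $\tau_0^2(l_0,l_1,l_2)$ plus two error sums, which I would isolate and treat separately. With $\mu=D\log\lambda$, the terms carrying $\mu$ in the left slot collect to $\mu\otimes\frac{\prod_i\Delta_{i,i+1}}{\prod_i\Delta_{i,i+2}}$; a direct computation of the two products using $\Delta_{ji}=-\Delta_{ij}$ gives their ratio as $-1$, so this contribution is $\mu\otimes(-1)$. The terms carrying $\lambda$ in the right slot collect to $\bigl(\sum_i D\log\Delta_{i,i+2}-\sum_i D\log\Delta_{i,i+1}\bigr)\otimes\lambda$, and since $\prod_i\Delta_{i,i+2}$ and $\prod_i\Delta_{i,i+1}$ differ only by the factor $-1$ their logarithmic derivatives coincide, so this contribution vanishes identically.

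The main obstacle, and the reason the section is phrased modulo $2$-torsion, is the surviving term $\mu\otimes(-1)=D\log\lambda\otimes(-1)$: it need not be zero, but it is $2$-torsion in $F\otimes_{\varmathbb{Z}}F^\times$ because $2\,(\mu\otimes(-1))=\mu\otimes(-1)^2=\mu\otimes 1=0$, hence it dies modulo $2$-torsion. Putting the three observations together shows that the transformed $\tau_0^2$ agrees with the original modulo $2$-torsion, which is exactly the asserted independence of $\omega$. The only things left are routine sign bookkeeping and the two product computations, which I would carry out in a line or two.
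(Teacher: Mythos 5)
Your proof is correct, and at bottom it is the same computation as the paper's: both come down to the fact that every $\Delta(l_i,l_j)$ rescales by the same $\lambda\in F^\times$, so that the determinant ratios in the right-hand slots are homogeneous of degree zero. The difference is that the paper's proof stops there: it regroups the six terms so that the right factors become ratios such as $\frac{\Delta(l_0,l_1)}{\Delta(l_2,l_1)}$ and asserts invariance ``due to homogeneity of the terms of the RH factors'', which accounts only for the right slots. It is silent about the left slots, where $D\log(\lambda\Delta)=D\log\lambda+D\log\Delta$ produces extra terms whenever $D(\lambda)\neq 0$ (the identity $\frac{D(\lambda x)}{\lambda x}=\frac{D(x)}{x}$, invoked in the neighbouring lemma, fails for a general absolute derivation). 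Your expansion supplies exactly the missing bookkeeping: the $\lambda$-in-right-slot errors $\bigl(\sum_i D\log\Delta_{i,i+2}-\sum_i D\log\Delta_{i,i+1}\bigr)\otimes\lambda$ vanish because the two products differ only by the factor $-1$, while the $D\log\lambda$-in-left-slot errors collect to $D\log\lambda\otimes(-1)$, which is $2$-torsion in $F\otimes_{\varmathbb{Z}}F^\times$ and hence dies under the section's standing mod-$2$-torsion convention. So your argument follows the paper's route but is more complete, and it makes visible something the paper's wording obscures: the invariance is not exact but only holds modulo $2$-torsion, which is precisely why that convention is needed for this lemma.
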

\begin{proof}
According to (\ref{todef}), $\tau_0^{2}$ can be written for the vectors $(l_0,l_1,l_2)$ as
\begin{align}
\tau_0^{2}(l_0,l_1,l_2) &=\frac{D\{\Delta(l_0,l_2)\}}{\Delta(l_0,l_2)}\otimes\Delta(l_0,l_1)-\frac{D\{\Delta(l_0,l_1)\}}{\Delta(l_0,l_1)}\otimes\Delta(l_0,l_2)\notag\\
&+\frac{D\{\Delta(l_1,l_0)\}}{\Delta(l_1,l_0)}\otimes\Delta(l_1,l_2)-\frac{D\{\Delta(l_1,l_2)\}}{\Delta(l_1,l_2)}\otimes\Delta(l_1,l_0)\notag\\
&+\frac{D\{\Delta(l_2,l_1)\}}{\Delta(l_2,l_1)}\otimes\Delta(l_2,l_0)-\frac{D\{\Delta(l_2,l_0)\}}{\Delta(l_2,l_0)}\otimes\Delta(l_2,l_1) \notag 
\end{align}
further we can also write as
\begin{equation}
\tau_0^{2}(l_0,l_1,l_2)=\frac{D\{\Delta(l_0,l_2)\}}{\Delta(l_0,l_2)}\otimes\frac{\Delta(l_0,l_1)}{\Delta(l_2,l_1)}-\frac{D\{\Delta(l_0,l_1)\}}{\Delta(l_0,l_1)}\otimes\frac{\Delta(l_0,l_2)}{\Delta(l_1,l_2)}+\frac{D\{\Delta(l_1,l_2)\}}{\Delta(l_1,l_2)}\otimes\frac{\Delta(l_2,l_0)}{\Delta(l_1,l_0)}\notag
\end{equation}
Changing the volume form $\omega\mapsto\lambda\omega$ does not change the expression on RHS, due to homogeneity of the terms of the RH factors.
%If we use the definition of $\Delta$ in terms of $\omega$ in the above equation then the result will remain same.
\end{proof}

Next lemma will show independence of the length of the vectors. 
\begin{lem}
$\tau_0^{2}\circ d(l_0,\ldots,l_3)$ does not depend on the length of the vectors $l_i$ in $V_2$.
\end{lem}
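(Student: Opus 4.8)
The plan is to track how $\tau_0^{2}$ transforms when the chosen representatives are rescaled, $l_j\mapsto\lambda_j l_j$ with $\lambda_j\in F^\times$, and to show that every correction term produced this way cancels once one applies the differential $d$. I would start from the compact three-term expression for $\tau_0^{2}$ obtained in the previous lemma. Abbreviating $d_{ij}:=D\log\Delta(l_i,l_j)=\frac{D\Delta(l_i,l_j)}{\Delta(l_i,l_j)}$ and working modulo $2$-torsion (so that the sign in $\Delta(l_j,l_i)=-\Delta(l_i,l_j)$ is irrelevant in both tensor slots, since $D(-1)=0$), this reads
\[\tau_0^{2}(l_0,l_1,l_2)=d_{01}\otimes\frac{\Delta(l_1,l_2)}{\Delta(l_0,l_2)}+d_{02}\otimes\frac{\Delta(l_0,l_1)}{\Delta(l_1,l_2)}+d_{12}\otimes\frac{\Delta(l_0,l_2)}{\Delta(l_0,l_1)}.\]

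Next I would record the two elementary effects of the rescaling. Since $\Delta$ is bilinear and alternating, $\Delta(l_i,l_j)\mapsto\lambda_i\lambda_j\Delta(l_i,l_j)$; hence in the left ($F$) slot $d_{ij}\mapsto d_{ij}+s_i+s_j$ with $s_k:=D\log\lambda_k$, while in the right ($F^\times$) slot each $\Delta$-ratio acquires a monomial in the $\lambda_j$. Expanding the product and subtracting the original value, the total correction to $\tau_0^{2}(l_0,l_1,l_2)$ splits into three pieces: a \emph{left} piece $L$ built from terms $s_\bullet\otimes(\Delta\text{-ratio})$, a \emph{right} piece $R$ built from terms $d_{ij}\otimes(\lambda\text{-ratio})$, and a purely scalar \emph{cross} piece $C$ built from terms $s_\bullet\otimes(\lambda\text{-ratio})$. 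A short simplification puts each of $L,R,C$ into a manifestly alternating form in the three arguments.

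Finally, since $\tau_0^{2}\circ d(l_0,\ldots,l_3)=\sum_{i=0}^{3}(-1)^{i}\tau_0^{2}(l_0,\ldots,\hat l_i,\ldots,l_3)$, the correction to $\tau_0^{2}\circ d$ is the alternating sum of $L+R+C$ over the four faces of $(l_0,l_1,l_2,l_3)$. I would then verify, separately for each of the three pieces, that this alternating sum vanishes: collecting contributions by the left tensor slot (by $s_j$ for $L$ and $C$, by the edge label of $d_{ij}$ for $R$) reduces every coefficient appearing in the right slot to a telescoping combination of logarithms of the $\Delta$'s or of the $\lambda_j$, which cancels identically. I expect the only real difficulty to be organizational rather than conceptual: one must keep the signs and the index bookkeeping across the four faces perfectly aligned so that the cancellations in all three pieces become simultaneously visible. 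No deeper input is needed---once $\tau_0^{2}$ is in the three-term form above, the claim reduces to a finite mechanical check.
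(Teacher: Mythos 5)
Your proposal is correct, and the cancellations it calls for do check out (your compact three--term rewriting of $\tau_0^{2}$ agrees, modulo $2$-torsion, with the form derived in the paper's preceding lemma). Writing $s_k=D\log\lambda_k$ and taking the alternating sum over the four faces of $(l_0,\ldots,l_3)$: in your $R$-piece each edge term $d_{ij}\otimes(\lambda\text{-ratio})$ occurs in exactly two faces, either with opposite signs and the same ratio or with the same sign and reciprocal ratios, so it cancels; in your $L$- and $C$-pieces, collecting by $s_j$ (each vertex lies in three faces), the product of the three right-slot entries attached to a fixed $s_j$ equals $1$. This is, however, a genuinely different organization from the paper's. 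The paper first regroups the twelve terms of $\tau_0^{2}\circ d(l_0,\ldots,l_3)$ into the three terms of (\ref{tod}), each of the form $D\log\bigl(\Delta(l_i,l_j)\Delta(l_k,l_m)\bigr)\otimes(\text{cross-ratio})$, so that scale-invariance of the right-hand factors is manifest, and then dismisses the left-hand factors by asserting $\frac{D(\lambda x)}{\lambda x}=\frac{D(x)}{x}$. As stated, that identity is false for a general $D\in Der_{\varmathbb{Z}}F$, since $D\log(\lambda x)=D\log\lambda+D\log x$; what actually saves the paper's argument is that all three left slots in (\ref{tod}) scale by the \emph{same} factor $\mu=\lambda_0\lambda_1\lambda_2\lambda_3$, and the signed product of the three cross-ratio right factors equals $-1$, so the total correction is $D\log\mu\otimes(-1)$, which vanishes modulo $2$-torsion (in force throughout $\S$\ref{inf_comp}). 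Your $L$/$R$/$C$ decomposition treats exactly this point explicitly, so it is the more careful argument at the spot where the paper is sloppy; what the paper's route buys is brevity and the closed form (\ref{tod}), which is reused immediately afterwards in the proof of Proposition \ref{claim1}.
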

\begin{proof}
By using a simple calculation we can write
\begin{align} \label{tod}
\tau_0^{2}\circ d(l_0,\ldots,l_3)
&=\frac{D\left\lbrace\Delta(l_0,l_1)\Delta(l_2,l_3)\right\rbrace}{\Delta(l_0,l_1)\Delta(l_2,l_3)}\otimes\frac{\Delta(l_0,l_2)\Delta(l_1,l_3)}{\Delta(l_0,l_3)\Delta(l_1,l_2)}\notag \\
&-\frac{D\left\lbrace\Delta(l_1,l_2)\Delta(l_0,l_3)\right\rbrace}{\Delta(l_1,l_2)\Delta(l_0,l_3)}\otimes\frac{\Delta(l_1,l_3)\Delta(l_0,l_2)}{\Delta(l_0,l_1)\Delta(l_2,l_3)} \notag \\
&+\frac{D\left\lbrace\Delta(l_0,l_2)\Delta(l_1,l_3)\right\rbrace}{\Delta(l_0,l_2)\Delta(l_1,l_3)}\otimes\frac{\Delta(l_0,l_3)\Delta(l_2,l_1)}{\Delta(l_0,l_1)\Delta(l_2,l_3)}\
\end{align}
now consider $\lambda \in F^\times$ and we know that $\frac{D(\lambda x)}{\lambda x}= \frac{D(x)}{x}$ for $\lambda \in F^\times$ and the other part of the right hand side is a cross-ratio.
\end{proof}

Note: Since $\tau_1^2$ is defined via cross-ratio and $d\log$ so there is no need to check things that are mandatory for $\tau_0^2$.
\begin{prop}\label{claim1}
The diagram below is commutative.
\end{prop}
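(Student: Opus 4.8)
The plan is to verify that the right-hand square of the diagram displayed above commutes, i.e. that $\partial^D\circ\tau_1^2=\tau_0^2\circ d$ as maps $C_4(2)\to F\otimes F^\times$, by a direct computation on a generator $(l_0,l_1,l_2,l_3)$. First I would introduce the shorthand $a_{ij}:=\Delta(l_i,l_j)$ and $e_{ij}:=\frac{D(a_{ij})}{a_{ij}}=D\log a_{ij}\in F$, so that every term occurring on either side has the shape $e_{ij}\otimes a_{kl}$ in $F\otimes F^\times$, with the first slot additive and the second slot multiplicative. The entire argument then reduces to collecting such terms and matching coefficients.

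The key preliminary step is to rewrite the cross-ratio and its complement. By definition $r=r(l_0,\ldots,l_3)=\frac{a_{03}a_{12}}{a_{02}a_{13}}$, while the cross-ratio identity \eqref{2did}, namely $a_{01}a_{23}=a_{02}a_{13}-a_{03}a_{12}$, gives at once $1-r=\frac{a_{01}a_{23}}{a_{02}a_{13}}$. Consequently $D\log r=e_{03}+e_{12}-e_{02}-e_{13}$ and $D\log(1-r)=e_{01}+e_{23}-e_{02}-e_{13}$. Substituting these into the definition of $\partial^D$ yields the left-hand side $\partial^D\circ\tau_1^2(l_0,\ldots,l_3)=-D\log(1-r)\otimes r+D\log(r)\otimes(1-r)$, expressed purely in the $e_{ij}$ and in $r,1-r$.

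For the right-hand side I would start from the already-derived expression \eqref{tod} for $\tau_0^2\circ d$. Each of its three second tensor factors is a monomial in the $a_{ij}$, and using the two identities above they are precisely $r^{-1}$, $(1-r)^{-1}$ and $\frac{r}{1-r}$ respectively (after replacing $\Delta(l_2,l_1)$ by $\Delta(l_1,l_2)$, which is legitimate because the two differ by the sign $-1$, a $2$-torsion element of $F^\times$ that dies modulo $2$-torsion). Expanding by bilinearity, turning the inverses in the second slot into signs, and collecting the coefficients of $\otimes r$ and of $\otimes(1-r)$, I expect the $\otimes r$ coefficient to come out as $e_{02}+e_{13}-e_{01}-e_{23}=-D\log(1-r)$ and the $\otimes(1-r)$ coefficient as $e_{03}+e_{12}-e_{02}-e_{13}=D\log(r)$, so that the two sides agree term by term.

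The computation is essentially mechanical; the one genuinely load-bearing step---and the only place where the projective geometry enters---is the use of the Pl\"ucker/cross-ratio relation \eqref{2did} to recognize $1-r$ as $\frac{a_{01}a_{23}}{a_{02}a_{13}}$, which is exactly what turns the monomial second factors of \eqref{tod} into powers of $r$ and $1-r$. The remaining work is pure bookkeeping: keeping the additive first slot and multiplicative second slot straight, and invoking the standing convention of working modulo $2$-torsion to discard the stray sign arising from $\Delta(l_2,l_1)=-\Delta(l_1,l_2)$.
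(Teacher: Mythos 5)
Your proposal is correct and takes essentially the same route as the paper's own proof: both hinge on using the cross-ratio identity \eqref{2did} to recognize $1-r$ as $\frac{\Delta(l_0,l_1)\Delta(l_2,l_3)}{\Delta(l_0,l_2)\Delta(l_1,l_3)}$, then compare $\partial^D\circ\tau_1^2$ with the expression \eqref{tod} for $\tau_0^2\circ d$ via additivity of $D\log$ and the standing modulo $2$-torsion convention (to absorb the sign $\Delta(l_2,l_1)=-\Delta(l_1,l_2)$). The only difference is direction of the bookkeeping --- the paper expands $\partial^D\circ\tau_1^2$ and cancels terms until it matches \eqref{tod}, whereas you rewrite the second tensor factors of \eqref{tod} as powers of $r$ and $1-r$ and collect coefficients --- which is the same computation read backwards.
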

\begin{displaymath}\label{bicompb}
 \xymatrix{
	& C_4(2)\ar[r]^d\ar[d]^{\tau_1^{2}}        & C_3(2)\ar[d]^{\tau_0^{2}}\\
                     & \beta^D_2(F)\ar[r]^{\partial^D}               & F\otimes F^\times }\tag{3.1b}\\
\end{displaymath}
\begin{proof}
The first thing is to calculate $\partial^D\circ \tau_1^{2}(l_0,\ldots,l_3)$ because we have already computed $\tau_0^{2}\circ d(l_0,\ldots,l_3)$ in (\ref{tod}) then by (\ref{t1def})
\[\tau_1^{2}(l_0,\cdots,l_3)= \left\llbracket\frac{\Delta(l_0,l_3)\Delta(l_1,l_2)}{\Delta(l_0,l_2)\Delta(l_1,l_3)}\right\rrbracket^D_2\]

According to this we can identify $l_0,\ldots,l_3$ with points in $\varmathbb{P}_F^1$, then by the 3-fold transitivity of PGL$_2(F)$ any  $(l_0,\ldots,l_3)\in (\varmathbb{P}_F^1)^4$ in generic position is PGL$_2(F)$ equivalent to $(0,\infty,1,a)$ for some $a\in F$
\[\tau_1^{2}\left(0,\infty,1,a\right)=\frac{D(a)}{a(1-a)}\langle a\rangle_2=\llbracket a\rrbracket^D_2 \text{     for any } a \in \varmathbb{P}_F^1-\{0,1,\infty\}\]
where $D\log(a)=\frac{D(a)}{a}$. Calculate $\partial^D\big(\llbracket a\rrbracket^D_2\big)$
\begin{align}
&=-\frac{D(1-a)}{(1-a)}\otimes a+\frac{D(a)}{a}\otimes (1-a)\notag\
\end{align}
For the vectors in $C_4(2)$ and by using the identity (\ref{2did}) we can write
\begin{align}
\partial^D\circ \tau_1^{2}(l_0,\ldots,l_3)=&-\frac{D\left\lbrace\frac{\Delta(l_0,l_1)\Delta(l_2,l_3)}{\Delta(l_0,l_2)\Delta(l_1,l_3)}\right\rbrace}{\frac{\Delta(l_0,l_1)\Delta(l_2,l_3)}{\Delta(l_0,l_2)\Delta(l_1,l_3)}}\otimes\frac{\Delta(l_0,l_3)\Delta(l_1,l_2)}{\Delta(l_0,l_2)\Delta(l_1,l_3)}\notag\\
&+\frac{D\left\lbrace\frac{\Delta(l_0,l_3)\Delta(l_1,l_2)}{\Delta(l_0,l_2)\Delta(l_1,l_3)}\right\rbrace}{\frac{\Delta(l_0,l_3)\Delta(l_1,l_2)}{\Delta(l_0,l_2)\Delta(l_1,l_3)}}\otimes\frac{\Delta(l_0,l_1)\Delta(l_2,l_3)}{\Delta(l_0,l_2)\Delta(l_1,l_3)}\notag
\end{align}
by using $\frac{D\left(\frac{a}{b}\right)}{\left(\frac{a}{b}\right)}=\frac{D(a)}{a}-\frac{D(b)}{b}$ and then cancelling two terms  we can convert the above into (\ref{tod}) and the diagram (\ref{bicompb}) is commutative.
\end{proof}
Further consider the diagram (\ref{bicomplex1}) and note that $\tau_1^{2}\circ d$ becomes
\begin{equation}\label{t1od}
\tau_1^{2}\circ d(l_0,\ldots,l_4)=\sum_{i=0}^4(-1)^i\llbracket r(l_0,\ldots,\hat{l_i},\ldots,l_4)\rrbracket^D_2\notag
\end{equation}
Now we can further verify that $\tau_1^{2}\circ d(l_0,\ldots,l_4) \in \ker (\partial^D)$
\begin{align}
&\partial^D\circ(\tau_1^{2}\circ d(l_0,\ldots,l_4))\notag\\
&=\sum_{i=0}^4\Big(
-\frac{D\left\lbrace1-r(l_0,\ldots,\hat{l_i},\ldots,l_4)\right\rbrace}{1-r(l_0,\ldots,\hat{l_i},\ldots,l_4)}\otimes r(l_0,\ldots,\hat{l_i},\ldots,l_4)\notag\\
&\quad\quad\quad+\frac{D\left\lbrace r(l_0,\ldots,\hat{l_i},\ldots,l_4)\right\rbrace}{r(l_0,\ldots,\hat{l_i},\ldots,l_4)}\otimes\left\lbrace1-r(l_0,\ldots,\hat{l_i},\ldots,l_4)\right\rbrace\Big)\notag
\end{align}
From now on we will write $(ij)$ for $\Delta(l_i,l_j)$ in short. The above expression can also be written for each value of $i$'s, e.g.

\[\text{ for $i=0$ we have }-\frac{D\left\lbrace\frac{(12)(43)}{(13)(42)}\right\rbrace}{\frac{(12)(43)}{(13)(42)}}\otimes\frac{(14)(23)}{(13)(24)}+\frac{D\left\lbrace\frac{(14)(23)}{(13)(24)}\right\rbrace}{\frac{(14)(23)}{(13)(42)}}\otimes\frac{(12)(43)}{(13)(42)}\]
and similarly for others as well.
%\notag\\&-\frac{D\left\lbrace\frac{(02)(43)}{(03)(42)}\right\rbrace}{\frac{(02)(43)}{(03)(42)}}\otimes\frac{(04)(23)}{(03)(24)}+\frac{D\left\lbrace\frac{(04)(23)}{(03)(24)}\right\rbrace}{\frac{(04)(23)}{(03)(24)}}\otimes\frac{(02)(43)}{(03)(42)}\notag\\&+\frac{D\left\lbrace\frac{(01)(43)}{(03)(41)}\right\rbrace}{\frac{(01)(43)}{(03)(41)}}\otimes\frac{(04)(13)}{(03)(14)}-\frac{D\left\lbrace\frac{(04)(13)}{(03)(14)}\right\rbrace}{\frac{(04)(13)}{(03)(14)}}\otimes\frac{(01)(43)}{(03)(41)}\notag\\&-\frac{D\left\lbrace\frac{(01)(42)}{(02)(41)}\right\rbrace}{\frac{(01)(42)}{(02)(41)}}\otimes\frac{(04)(12)}{(02)14)}+\frac{D\left\lbrace\frac{(04)(12)}{(02)(14)}\right\rbrace}{\frac{(04)(12)}{(02)(14)}}\otimes\frac{(01)(42)}{(02)(41)}\notag\\&+\frac{D\left\lbrace\frac{(01)(32)}{(02)(31)}\right\rbrace}{\frac{(01)(32)}{(02)(31)}}\otimes\frac{(03)(12)}{(02)(13)}-\frac{D\left\lbrace\frac{(03)(12)}{(02)(13)}\right\rbrace}{\frac{(03)(12)}{(02)(13)}}\otimes\frac{(01)(32)}{(02)(31)}\notag

If we multiply out, using $\frac{D(ab)}{ab}=\frac{D(a)}{a}+\frac{D(b)}{b}$ and start to collect each term of the form $\frac{D(ij)}{(ij)}\otimes \cdots$ from the above i.e. fix $i$ and $j$, calculate the sum of all, then we will be able to see that every individual term of $\frac{D(ij)}{(ij)}\otimes \cdots$ is 0. For example $\frac{D(01)}{(01)}\otimes\frac{(04)(13)}{(03)(14)}\frac{(02)(14)}{(04)(12)}\frac{(03)(12)}{(02)(13)}=0$ since the RHS is 2-torsion in $F^\times$ so we can easily say that the above is zero and $\tau_1^{2}\circ d\in \ker (\partial^D)$.

\textbf{Projeced cross-ratio:} For $l_0,\ldots,l_4\in \varmathbb{P}^2_F$, $r(l_0|l_1,l_2,l_3,l_4)$ is the projected cross-ratio of four points $l_0,\ldots,l_4$ projeced from $l_0$ and is defined as 
\[r(l_0|l_1,l_2,l_3,l_4)=\frac{\Delta(l_0,l_1,l_4)\Delta(l_0,l_2,l_3)}{\Delta(l_0,l_1,l_3)\Delta(l_0,l_2,l_4)},\]
where $\Delta(l_i,l_j,l_k)$ is a $3\times3$ determinant for $l_i,l_j,l_k\in\varmathbb{P}^2_F$
\begin{lem}\label{Gon5term}
(Goncharov, A. B., \cite{Gonc}) Let $x_0,\ldots,x_4$ be five points in generic position in $\varmathbb{P}^2_F$. Then
\[\sum_{i=0}^4(-1)^i[r(x_i|x_0,\ldots,\hat{x}_i,\ldots,x_4)]=0{} \in \mathcal{B}_2(F),\]
where $r(x_0|x_1,x_2,x_3,x_4)$ is the projected cross-ratio of four points $x_1,\ldots,x_4$ projected from $x_0$

\end{lem}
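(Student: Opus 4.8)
The plan is to reduce the statement to the very five-term relation that defines $\mathcal{B}_2(F)$, by producing an explicit configuration of five points on $\mathbb{P}^1_F$ whose five cross-ratios are exactly the five projected cross-ratios in the sum. Write $\Delta_{ijk}:=\Delta(x_i,x_j,x_k)$ and $r_i:=r(x_i\mid x_0,\dots,\hat{x}_i,\dots,x_4)$. Expanding the definition of the projected cross-ratio and using the antisymmetry of $\Delta$ to sort indices, a direct computation gives
\begin{align*}
r_0&=\frac{\Delta_{014}\Delta_{023}}{\Delta_{013}\Delta_{024}}, & r_1&=\frac{\Delta_{014}\Delta_{123}}{\Delta_{013}\Delta_{124}}, & r_2&=\frac{\Delta_{024}\Delta_{123}}{\Delta_{023}\Delta_{124}},\\
r_3&=\frac{\Delta_{034}\Delta_{123}}{\Delta_{023}\Delta_{134}}, & r_4&=\frac{\Delta_{034}\Delta_{124}}{\Delta_{024}\Delta_{134}}.
\end{align*}
Only eight of the ten triple-determinants occur here; $\Delta_{012}$ and $\Delta_{234}$ are absent, which is the first hint that the five quantities are not independent.

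I would then isolate the three algebraic relations satisfied by $r_0,\dots,r_4$. Two of them, $r_1=r_0r_2$ and $r_3=r_2r_4$, drop out immediately by cancellation in the displayed fractions. The third, $r_0\,(1-r_3)=1-r_4$, is the heart of the matter; clearing denominators it is equivalent to
\[\Delta_{134}\bigl(\Delta_{013}\Delta_{024}-\Delta_{014}\Delta_{023}\bigr)=\Delta_{034}\bigl(\Delta_{013}\Delta_{124}-\Delta_{014}\Delta_{123}\bigr).\]
I would prove this from the two Grassmann--Plücker relations
\[\Delta_{013}\Delta_{024}-\Delta_{014}\Delta_{023}=\Delta_{012}\Delta_{034},\qquad \Delta_{013}\Delta_{124}-\Delta_{014}\Delta_{123}=\Delta_{012}\Delta_{134},\]
after which both sides collapse to the common value $\Delta_{012}\Delta_{034}\Delta_{134}$. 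Each Plücker relation is itself an instance of the cross-ratio identity \eqref{2did}, read in the quotient plane $V_3/\langle x_0\rangle$ for the first and $V_3/\langle x_1\rangle$ for the second, using that projection from $x_i$ turns the $3\times3$ determinant $\Delta(x_i,x_j,x_k)$ into the $2\times2$ determinant of the images $\overline{x_j},\overline{x_k}$.

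Finally I would put $p:=(0,\infty,1,r_4,r_3)\in(\mathbb{P}^1_F)^5$; these points are in generic position because genericity of the $x_j$ forces every $\Delta_{ijk}\neq0$. Using the normalisation $r(0,\infty,1,t)=t$ together with the three relations above, one verifies term by term that $r_i=r(p_0,\dots,\hat{p}_i,\dots,p_4)$ for each $i$: for example $r_0=r(\infty,1,r_4,r_3)=\tfrac{1-r_4}{1-r_3}$ is exactly the third relation, while $r_2=r(0,\infty,r_4,r_3)=r_3/r_4$ restates $r_3=r_2r_4$. Consequently
\[\sum_{i=0}^4(-1)^i[r_i]=\sum_{i=0}^4(-1)^i\bigl[r(p_0,\dots,\hat{p}_i,\dots,p_4)\bigr],\]
and the right-hand side is a generator of $R_2(F)$, hence zero in $\mathcal{B}_2(F)$.

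The single real obstacle is the identity $r_0(1-r_3)=1-r_4$: the rest is bookkeeping, but this step hinges on choosing the correct two Plücker relations and on tracking the signs that appear both when sorting the indices of $\Delta$ and when descending to the quotient planes. Once it is established, matching the $r_i$ to the cross-ratios of $p$ and invoking the defining five-term relation is automatic.
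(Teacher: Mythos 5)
Your proposal is correct, and note that it supplies an argument where the paper gives none: the paper's entire ``proof'' of Lemma \ref{Gon5term} is the citation ``See Lemma 2.18 in \cite{Gonc}''. I checked your computation and it goes through: the formulas for $r_0,\dots,r_4$ agree with the paper's definition of the projected cross-ratio after sorting indices by antisymmetry; $r_1=r_0r_2$ and $r_3=r_2r_4$ are immediate cancellations; the key identity $r_0(1-r_3)=1-r_4$ is equivalent to $\Delta_{134}\bigl(\Delta_{013}\Delta_{024}-\Delta_{014}\Delta_{023}\bigr)=\Delta_{034}\bigl(\Delta_{013}\Delta_{124}-\Delta_{014}\Delta_{123}\bigr)$, and by the two Pl\"ucker relations (each being \eqref{2did} read in $V_3/\langle x_0\rangle$, resp.\ $V_3/\langle x_1\rangle$) both sides collapse to $\Delta_{012}\Delta_{034}\Delta_{134}$; finally, with the paper's cross-ratio conventions the five cross-ratios of $(0,\infty,1,r_4,r_3)$ are exactly $r_0,\dots,r_4$ with matching signs, so the alternating sum is a generator of $R_2(F)$ and vanishes in $\mathcal{B}_2(F)$. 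This is also a genuinely different route from the proof the paper defers to: Goncharov's argument is geometric, resting on the conic through the five points and the classical (Chasles) fact that the cross-ratio of four points of a conic, projected from a fifth point of the conic, does not depend on that point, which identifies all five projected cross-ratios as cross-ratios on the conic $\cong\mathbb{P}^1_F$ in one stroke. Your version replaces that classical input by determinant bookkeeping and is self-contained within this paper's toolkit, since it invokes only the identity \eqref{2did} in quotient planes, which the paper already uses elsewhere. One point needs tightening: to invoke the generator of $R_2(F)$ you need the five points $0,\infty,1,r_4,r_3$ to be pairwise distinct (equivalently, all five cross-ratios must avoid $0,1,\infty$, so that the element lies in $\mathbb{Z}[\mathbb{P}^1_F\setminus\{0,1,\infty\}]$); non-vanishing of the $\Delta_{ijk}$ only gives $r_i\neq0,\infty$, while $r_i\neq1$ and $r_3\neq r_4$ require the same Pl\"ucker family again, e.g.\ $1-r_4=\Delta_{014}\Delta_{234}/(\Delta_{024}\Delta_{134})$ and $r_3/r_4=r_2$ with $1-r_2=\Delta_{012}\Delta_{234}/(\Delta_{023}\Delta_{124})$, so your parenthetical justification of genericity should be expanded by a sentence along these lines.
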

See Lemma 2.18 in \cite{Gonc} for the proof.\hfill $\Box$

In continuation of the above lemma we have a similar result here which shows that the projected five-term (or four-term in special condition) relation can also be presented for $\beta_2^D(F)$ in the same way using geometric configurations of five points in $\varmathbb{P}^2_F$.
\begin{lem}\label{4pt}
Let $x_0,\ldots,x_4$ be 5 points in generic position in $\varmathbb{P}_F^2$ then, for any $D\in Der_{\varmathbb{Z}}F$ 
\begin{align}\label{4term5}
\sum_{i=0}^4(-1)^i\llbracket r(x_i|x_0,\ldots,\hat{x_i},\ldots,x_4)\rrbracket^D_2=0 \in \beta^D_2(F)
\end{align}
\end{lem}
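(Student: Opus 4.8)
The plan is to reduce the statement to Goncharov's projected five-term relation (Lemma \ref{Gon5term}) by transporting it along the infinitesimalisation map $\tau_{2,D}$. Recall from $\S$\ref{def1} that $\tau_{2,D}\colon\mathcal{B}_2(F)\to\beta_2(F)$ is given on generators by $[a]_2\mapsto\frac{D(a)}{a(1-a)}\langle a\rangle_2=\llbracket a\rrbracket^D_2$, with image identified with $\beta^D_2(F)$. First I would check that the assignment $[a]_2\mapsto\llbracket a\rrbracket^D_2$ defines a homomorphism $\mathcal{B}_2(F)\to\beta^D_2(F)$. This is immediate from the construction of $\beta^D_2(F)$ in $\S$\ref{def1}, where it is cut out of $F[F^{\bullet\bullet}]$ by imposing exactly the five-term relation
\[\llbracket a\rrbracket^D-\llbracket b\rrbracket^D+\left\llbracket\frac{b}{a}\right\rrbracket^D-\left\llbracket\frac{1-b}{1-a}\right\rrbracket^D+\left\llbracket\frac{1-b^{-1}}{1-a^{-1}}\right\rrbracket^D=0;\]
this is the image under $\tau_{2,D}$ of the defining five-term relation of $\mathcal{B}_2(F)$, so the generating relations of $\mathcal{B}_2(F)$ are sent to $0$ and the map descends.

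Next I would apply this homomorphism to the identity of Lemma \ref{Gon5term}. Writing $r_i=r(x_i\mid x_0,\ldots,\hat{x_i},\ldots,x_4)$ for the projected cross-ratios of the five generic points $x_0,\ldots,x_4\in\varmathbb{P}^2_F$, that lemma gives $\sum_{i=0}^4(-1)^i[r_i]_2=0$ in $\mathcal{B}_2(F)$. Applying $\tau_{2,D}$, using $\tau_{2,D}([r_i]_2)=\llbracket r_i\rrbracket^D_2$ together with additivity, yields $\sum_{i=0}^4(-1)^i\llbracket r_i\rrbracket^D_2=0$ in $\beta^D_2(F)$, which is precisely \eqref{4term5}.

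The only point requiring care is the well-definedness of $\tau_{2,D}$ as a map landing in $\beta^D_2(F)$, i.e. that every defining relation of $\mathcal{B}_2(F)$ is carried into the relation space of $\beta^D_2(F)$. This is exactly where the design of $\beta^D_2(F)$ pays off: because its single defining relation is modelled on the same five-term relation with matching arguments, no separate infinitesimal computation is needed, in contrast to the passage to $\beta_2(F)$, which must route through Cathelineau's four-term relation (Remark \ref{fifth}). In this sense the functorial argument is almost immediate. Should one instead prefer a self-contained route avoiding any appeal to well-definedness, the alternative is to rerun Goncharov's proof of Lemma \ref{Gon5term} symbol by symbol: there one normalises the five points by $\mathrm{PGL}_3(F)$ and rewrites the alternating sum of projected cross-ratios using only instances of the five-term relation. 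Since $\beta^D_2(F)$ satisfies the identical five-term relation ($\S$\ref{func_beta}), the same chain of substitutions is valid verbatim after replacing each $[\,\cdot\,]_2$ by $\llbracket\,\cdot\,\rrbracket^D_2$; the main obstacle in that approach is then purely the bookkeeping of matching the projected cross-ratios of the normalised configuration to the arguments of the five-term relation.
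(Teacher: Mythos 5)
Your proposal is correct and takes essentially the same route as the paper: the paper's own proof likewise invokes Goncharov's Lemma \ref{Gon5term} and then notes that, by the construction in $\S$\ref{def1} (the map $\tau_{2,D}$ and the fact that $\beta^D_2(F)$ is cut out by the very same five-term relation), the alternating sum \eqref{4term5} is an instance of the defining relation of $\beta^D_2(F)$ and hence vanishes. Your explicit check that $\tau_{2,D}$ descends to a homomorphism $\mathcal{B}_2(F)\to\beta^D_2(F)$ merely spells out what the paper leaves implicit.
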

\begin{proof}
If $x_0,\ldots,x_4$ in $\varmathbb{P}_F^2$ then Lemma \ref{Gon5term} gives projected five-term relation  
\[\sum_{i=0}^4(-1)^i[r(x_i|x_0,\ldots,\hat{x_i},\ldots,x_4)]=0 \in B_2(F).\]
According to definition of $D\in Der_{\varmathbb{Z}}F$ in $\S$\ref{def1} above \eqref{4term5} is the five-term relation in $\beta^D_2(F)$.
\end{proof}
%As we discussed the definition of $D$ in section (\ref{def1}) and also ``derivation map'' is used from polylogarithmic group to infinitesimal polylogarithmic group (see $\S$6 of \cite{PandG}) we can say that (\ref{4term5}) is a five-term relation in $\beta^D_2(F)$.\hfill $\Box$
\begin{ex}\label{four_term_ex}
\end{ex}
By appropriate  specialization  of the configuration in $C_5(2)$, we can use (*) to produce Cathelineau's four-term relation from the geometric configurations by using the operator $D=a(1-a)\frac{\partial}{\partial a}+b(1-b)\frac{\partial}{\partial b}$ for $F=K(a,b)$ where $a$ and $b$ are indeterminates  over the field $K$ and $\frac{\partial}{\partial a}$ and $\frac{\partial}{\partial b}$ are the usual partial derivatives (see $\S$6 of \cite{PandG}). Let $(0,\infty,1,a,b)\in (\varmathbb{P}_F^1)^5$ in generic position be the five-tuple corresponding to $\left(l_0,\ldots,l_4\right)=\left(\left(\begin{array}{c}0\\1\end{array}\right),\left(\begin{array}{c}1\\0\end{array}\right),\left(\begin{array}{c}1\\1\end{array}\right),\left(\begin{array}{c}a\\1\end{array}\right),\left(\begin{array}{c}b\\1\end{array}\right)\right)\in C_5(2)$. Calculate all possible determinants formed by $(l_0,\ldots,l_4)\in C_5(2)$, i.e. $\Delta(l_i,l_j)\text{ for }0\leq i<j\leq4$, put all of them in (\ref{t1od}), we get
\[\llbracket a \rrbracket^D_2-\llbracket b \rrbracket^D_2+\left\llbracket \frac{b}{a}\right\rrbracket^D_2-\left\llbracket \frac{1-b}{1-a}\right\rrbracket^D_2+\left\llbracket \frac{1-\frac{1}{b}}{1-\frac{1}{a}}\right\rrbracket^D_2=0\]
since $\tau^2_1\circ d\in\ker(\partial^D)$, then we use $D$ defined above, calculate each term of the above to form cathelineau's four-term relation:
\begin{equation} \label{fterm}
\langle a\rangle-\langle b\rangle+a\left\langle \frac{b}{a}\right\rangle+(1-a)\left\langle \frac{1-b}{1-a}\right\rangle=0\notag
\end{equation}

\subsection{Infinitesimal Trilogarithm}\label{add_tri}
Let $C_m(3)$ (or $C_m(\varmathbb{P}^2_F)$) be the free abelian group generated by the configurations of $m$ vectors in a three dimensional vector space $V_3$ over a field $F$ (or $m$ points in $\varmathbb{P}_F^2$) in generic position.
Consider the following diagram

\begin{displaymath}\label{bicomp2}
\xymatrix{
C_6(3)\ar[r]^{d}\ar[d]^{\tau_2^3}     & C_5(3)\ar[r]^{d}\ar[d]^{\tau_1^{3}}        &C_4(3)\ar[d]^{\tau_0^{3}}\\
\beta^D_3(F)\ar[r]^{\partial\qquad\qquad}     & (\beta^D_2(F)\otimes F^\times) \oplus  (F\otimes \mathcal{B}_2(F))\ar[r]^{\qquad\qquad\partial}          & F\otimes \bigwedge^2 F^\times}\ \tag{3.2a}
\end{displaymath}
where 
\begin{align} \label{t03def}
\tau_0^{3}:(l_0,\ldots,l_3)\mapsto\sum_{i=0}^3(-1)^{i}&\frac{D\Delta(l_0,\ldots,\hat{l_i},\ldots,l_3)}{\Delta(l_0,\ldots,\hat{l_i},\ldots,l_3)}\otimes \frac{\Delta(l_0,\ldots,\hat{l}_{i+1},\ldots,l_3)}{\Delta(l_0,\ldots,\hat{l}_{i+2},\ldots,l_3)}\notag\\ &\wedge\frac{\Delta(l_0,\ldots,\hat{l}_{i+3},\ldots,l_3)}{\Delta(l_0,\ldots,\hat{l}_{i+2},\ldots,l_3)} 
\end{align}
\begin{align}
\tau_1^{3}:(l_0,\ldots,l_4)\mapsto -\frac{1}{3}\sum_{i=0}^4 &(-1)^i\{
\llbracket r(l_i|l_0,\ldots,\hat{l}_i,\ldots,l_4) \rrbracket^D_2 \otimes \prod_{j\neq i}\Delta(\hat{l}_i,\hat{l}_j) \notag \\
&+\frac{D\left(\prod_{j\neq i}\Delta(\hat{l}_i,\hat{l}_j)\right)}{\prod_{j\neq i}\Delta(\hat{l}_i,\hat{l}_j)}\otimes [r(l_i|l_0,\ldots,\hat{l}_i,\ldots,l_4)]_2\} \notag
\end{align}
\begin{align}
&\tau_2^3:(l_0,\ldots,l_5)\mapsto\frac{2}{45}\text{Alt}_6\left\llbracket \frac{\Delta(l_0,l_1,l_3)\Delta(l_1,l_2,l_4)\Delta(l_2,l_0,l_5)}{\Delta(l_0,l_1,l_4)\Delta(l_1,l_2,l_5)\Delta(l_2,l_0,l_3)}\right\rrbracket^D_3\notag
\end{align}
where \[\llbracket a\rrbracket^D_3 = \frac{D(a)}{a(1-a)}\langle a\rangle_3\text{ and } \Delta(\hat{l}_i,\hat{l}_j)=\Delta(l_0,\ldots,\hat{l}_i,\ldots,\hat{l}_j,\ldots,l_4)\]
\[\partial^D\left(\left\llbracket a\right\rrbracket^D_3\right) =\left\llbracket a \right\rrbracket^D_2\otimes a +\frac{D(a)}{a}\otimes [a]_2\]
\[\partial^D\left(\llbracket a\rrbracket^D_2\otimes b + x\otimes [y]_2\right)=\frac{D(1-a)}{1-a}\otimes a\wedge b-\frac{D(a)}{a}\otimes (1-a)\wedge b+x\otimes (1-y)\wedge y\]
First we need to show that our maps $\tau^3_0$ and $\tau^3_1$ are independent of the chosen volume form $\omega$. There is no need to show that same thing for the map $\tau^3_2$. The proofs of the following three lemmas are similar to those in $\S$3 of \cite{Gonc}. 
\begin{lem}
$\tau_0^{3}$ is independent of the volume element $\omega \in \det V^*_3$.
\end{lem}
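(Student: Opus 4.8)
The plan is to follow Goncharov's volume-form check in $\S 3$ of \cite{Gonc}, tracking how $\tau_0^3$ transforms under a rescaling $\omega \mapsto \lambda\omega$ with $\lambda \in F^\times$ and showing the change is zero. Write $\Delta_j := \Delta(l_0,\ldots,\hat{l}_j,\ldots,l_3)$ for the $3\times 3$ minor omitting $l_j$. Since $\Delta(l_i,l_j,l_k) = \langle\omega, l_i\wedge l_j\wedge l_k\rangle$ is linear in $\omega$, the rescaling multiplies every $\Delta_j$ by the common factor $\lambda$, so I would analyze the two tensor slots of (\ref{t03def}) separately under this scaling.

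The second slot is a wedge of ratios $\frac{\Delta_{i+1}}{\Delta_{i+2}}\wedge\frac{\Delta_{i+3}}{\Delta_{i+2}}$ (indices read mod $4$); as each ratio is homogeneous of degree $0$ in $\lambda$, this slot is unchanged. The first slot is $D\log\Delta_i = \frac{D\Delta_i}{\Delta_i}$, and here the Leibniz rule gives $D\log(\lambda\Delta_i) = D\log\lambda + D\log\Delta_i$, so the rescaling produces exactly one extra contribution, namely the cross-term
\[
D\log\lambda \;\otimes\; \sum_{i=0}^3 (-1)^i\, \frac{\Delta_{i+1}}{\Delta_{i+2}}\wedge\frac{\Delta_{i+3}}{\Delta_{i+2}}.
\]
Thus it suffices to show that this alternating wedge sum vanishes in $\bigwedge^2 F^\times$, since then the whole cross-term is $D\log\lambda\otimes 0 = 0$.

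To verify the vanishing I would expand each summand bilinearly and discard the diagonal term $\Delta_{i+2}\wedge\Delta_{i+2}=0$, obtaining
\[
\frac{\Delta_{i+1}}{\Delta_{i+2}}\wedge\frac{\Delta_{i+3}}{\Delta_{i+2}} = \Delta_{i+1}\wedge\Delta_{i+3} - \Delta_{i+1}\wedge\Delta_{i+2} - \Delta_{i+2}\wedge\Delta_{i+3}.
\]
Collecting, over $i=0,1,2,3$, the coefficient of each of the six pairs $\Delta_a\wedge\Delta_b$, the alternating signs $(-1)^i$ together with the antisymmetry of the wedge make every pair cancel in turn, so the sum is zero; the running convention of working modulo $2$-torsion is what lets me drop the factors $-1$ coming from the antisymmetry $\Delta(\cdots)=-\Delta(\cdots)$ of the determinants. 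Since the second slot is invariant and the cross-term vanishes, $\tau_0^3$ is independent of $\omega$. The genuinely new feature compared with Goncharov's motivic computation is the $D\log\lambda$ factor replacing his wedge factor $\lambda\wedge(\cdots)$; the only nontrivial point is the combinatorial cancellation of the six wedge pairs, which I expect to be the main, though entirely mechanical, obstacle.
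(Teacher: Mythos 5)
Your proof is correct, and it diverges from the paper's own argument exactly at the step where the paper is too quick. Both arguments share the same skeleton: under $\omega\mapsto\lambda\omega$ every minor $\Delta_j=\Delta(l_0,\ldots,\hat{l}_j,\ldots,l_3)$ scales by the common factor $\lambda$, and the wedge slot of \eqref{t03def} is unchanged because it is built from the degree-zero ratios $\Delta_{i+1}/\Delta_{i+2}$ and $\Delta_{i+3}/\Delta_{i+2}$ (the paper makes this visible by rewriting the second factor as $\Delta_{i+2}/\Delta_{i+3}$ at the cost of the sign $(-1)^{i+1}$). But for the first slot the paper simply invokes the identity $\frac{D(\lambda a)}{\lambda a}=\frac{D(a)}{a}$ for all $\lambda\in F^\times$, which for an absolute derivation $D\in Der_{\mathbb{Z}}(F)$ is false unless $D\lambda=0$: the Leibniz rule gives $D\log(\lambda a)=D\log\lambda+D\log a$. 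You keep this extra term honestly and reduce the lemma to the vanishing of the cross-term
\[
D\log\lambda\otimes\sum_{i=0}^{3}(-1)^i\,\frac{\Delta_{i+1}}{\Delta_{i+2}}\wedge\frac{\Delta_{i+3}}{\Delta_{i+2}}\,,
\]
which you then establish by expanding bilinearly and cancelling the six pairs $\Delta_a\wedge\Delta_b$; I checked the bookkeeping and the alternating sum does vanish identically. So your route supplies precisely the justification the paper's proof glosses over, and the same cross-term device is what is needed in the paper's neighbouring well-definedness lemmas (e.g.\ the length-independence of $\tau_0^2\circ d$ in \eqref{tod}, where the analogous cross-term is $D\log(\lambda_0\lambda_1\lambda_2\lambda_3)\otimes(-1)$ and dies only modulo $2$-torsion). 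One small correction to your write-up: in the present lemma the cancellation is integral, using only bilinearity and $x\wedge x=0$; since each $\Delta_j$ occurs with its arguments in the fixed order $(l_0,\ldots,\hat{l}_j,\ldots,l_3)$, no antisymmetry $\Delta(\cdots)=-\Delta(\cdots)$ of determinants is ever invoked, so the mod-$2$-torsion convention is not actually needed here.
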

\begin{proof}
We can write equation (\ref{t03def}) in the form
\begin{align}
\tau_0^{3}(l_0,\ldots,l_3)=\sum_{i=0}^3(-1)^{i+1}&\frac{D\Delta(l_0,\ldots,\hat{l_i},\ldots,l_3)}{\Delta(l_0,\ldots,\hat{l_i},\ldots,l_3)}\otimes \frac{\Delta(l_0,\ldots,\hat{l}_{i+1},\ldots,l_3)}{\Delta(l_0,\ldots,\hat{l}_{i+2},\ldots,l_3)}\notag\\ &\wedge\frac{\Delta(l_0,\ldots,\hat{l}_{i+2},\ldots,l_3)}{\Delta(l_0,\ldots,\hat{l}_{i+3},\ldots,l_3)}
\end{align}
If we apply the definition of $\Delta$ in terms of $\omega$ in the above then the last two factors will remain unchanged and we know that $\frac{D(\lambda a)}{\lambda a}=\frac{D(a)}{a}$ for all $\lambda \in F^\times$.
\end{proof}
\begin{lem}
$\tau_1^{3}$ is independent of the volume element $\omega \in \det V_3^*$.
\end{lem}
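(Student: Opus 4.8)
The plan is to trace the effect of a rescaling $\omega \mapsto \lambda\omega$ (with $\lambda \in F^\times$) through the two components of $\tau_1^3$, which live in $\beta_2^D(F)\otimes F^\times$ and in $F\otimes\mathcal{B}_2(F)$ respectively, and to show that each component is left unchanged by invoking the two five-term relations already established: Lemma \ref{4pt} for $\beta_2^D(F)$ and Lemma \ref{Gon5term} for $\mathcal{B}_2(F)$.

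First I would record how each ingredient transforms. The projected cross-ratio $r(l_i|l_0,\ldots,\hat{l}_i,\ldots,l_4)$ is a ratio of products of $3\times 3$ determinants with equally many factors in numerator and denominator, hence homogeneous of degree zero in $\omega$; therefore $r$, and with it both $\llbracket r\rrbracket^D_2 \in \beta_2^D(F)$ and $[r]_2 \in \mathcal{B}_2(F)$, is invariant under the rescaling. The only $\omega$-dependence sits in $P_i := \prod_{j\neq i}\Delta(\hat{l}_i,\hat{l}_j)$; as this is a product of four determinants, each scaling by $\lambda$, we get $P_i \mapsto \lambda^4 P_i$, with the exponent $4$ the same for every $i$. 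This uniformity in $i$ is the point on which everything hinges.

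Next I would treat the two summands separately. In the $\beta_2^D(F)\otimes F^\times$ slot the rescaling sends $\llbracket r\rrbracket^D_2\otimes P_i$ to $\llbracket r\rrbracket^D_2\otimes\lambda^4 P_i$; writing $F^\times$ multiplicatively and using $\varmathbb{Z}$-bilinearity, the difference collapses to $-\tfrac{4}{3}\bigl(\sum_{i=0}^4(-1)^i\llbracket r(l_i|\ldots)\rrbracket^D_2\bigr)\otimes\lambda$, which is zero by Lemma \ref{4pt}. In the $F\otimes\mathcal{B}_2(F)$ slot the first tensor entry is $D\log P_i=\frac{D(P_i)}{P_i}$, and since $\frac{D(\lambda^4 P_i)}{\lambda^4 P_i}=\frac{D(P_i)}{P_i}+4\frac{D(\lambda)}{\lambda}$, the difference collapses to $-\tfrac{4}{3}\,\frac{D(\lambda)}{\lambda}\otimes\bigl(\sum_{i=0}^4(-1)^i[r(l_i|\ldots)]_2\bigr)$, which is zero by Lemma \ref{Gon5term}. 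Hence both corrections vanish and $\tau_1^3$ does not depend on $\omega$.

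I do not expect a serious obstacle here; the only care needed is bookkeeping. One must check that $r$ truly carries no $\lambda$, that the power of $\lambda$ in $P_i$ is independent of $i$ so it can be pulled out as a common factor, and that after factoring one obtains exactly the two alternating projected five-term sums — one in $\beta_2^D(F)$, the other in $\mathcal{B}_2(F)$ — so that the two lemmas apply verbatim. The mildly subtle move is the use of $\varmathbb{Z}$-bilinearity with $F^\times$ written multiplicatively in the first summand (turning $\otimes\lambda^4$ into $4\,(\,\cdot\,\otimes\lambda)$) versus ordinary additivity in the first tensor slot of the second summand, where the correction $D\log(\lambda^4)=4\frac{D(\lambda)}{\lambda}$ appears; keeping these two conventions straight is what makes the cancellations line up.
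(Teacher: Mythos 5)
Your proposal is correct and follows essentially the same route as the paper: rescaling $\omega\mapsto\lambda\omega$ multiplies each product $\prod_{j\neq i}\Delta(\hat{l}_i,\hat{l}_j)$ by $\lambda^4$ uniformly in $i$, and the resulting correction in the $\beta_2^D(F)\otimes F^\times$ component is killed by the projected five-term relation of Lemma \ref{4pt}. In fact your write-up is slightly more complete than the paper's, which merely asserts that the $F\otimes\mathcal{B}_2(F)$ component of the difference vanishes, whereas you justify it via $D\log(\lambda^4 P_i)=D\log(P_i)+4\frac{D(\lambda)}{\lambda}$ and Lemma \ref{Gon5term}.
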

\begin{proof}
To prove the above we will take the difference of the elements $\tau_1^{3}(l_0,\ldots,l_4)$ by using the volume forms $\lambda\cdot\omega$ and $\omega(\lambda \in F^\times)$, term of  type $F\otimes\mathcal{B}_2(F)$ will be zero while the term of type $\beta_2^D(F)\otimes F^\times$ will be 
\begin{align}
=-\frac{1}{3}\sum_{i=0}^4 &(-1)^i\Big(\llbracket r(l_i|l_0,\ldots,\hat{l}_i,\ldots,l_4) \rrbracket^D_2 \otimes\lambda^4\prod_{i\neq j}\Delta(\hat{l}_i,\hat{l}_j)\notag\\
&-\llbracket r(l_i|l_0,\ldots,\hat{l}_i,\ldots,l_4) \rrbracket^D_2\otimes\prod_{i\neq j}\Delta(\hat{l}_i,\hat{l}_j)\notag\\
=-\frac{1}{3}\sum_{i=0}^4&(-1)^i\llbracket r(l_i|l_0,\ldots,\hat{l}_i,\ldots,l_4) \rrbracket^D_2 \otimes\lambda^4\notag
\end{align}
We use lemma \ref{4pt} which shows that left factor of the above is simply the projected five-term relation in $\beta^D_2(F)$. 
\end{proof}

Now we need to show here that the composition map $\tau^3_1\circ d$ is independent of the length of the vectors in $V_3$.
\begin{lem}
$\tau_1^{3}\circ d$ does not depend on the length of the vectors $l_i$ in $V_3$.
\end{lem}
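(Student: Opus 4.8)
The plan is to show that replacing a single vector $l_M$ by $\lambda l_M$ (for $\lambda\in F^\times$ and a fixed $0\le M\le 5$) leaves $\tau_1^{3}\circ d(l_0,\ldots,l_5)$ unchanged; independence from all the lengths then follows by iterating over $M$. First I would record two elementary scaling facts. Each projected cross-ratio $r(l_i|\cdots)$ is a ratio of $3\times3$ determinants that is homogeneous of degree $0$ in every vector, hence invariant under $l_M\mapsto\lambda l_M$; consequently the symbols $\llbracket r(l_i|\cdots)\rrbracket^D_2$ and $[r(l_i|\cdots)]_2$ do not move, and the only factors of $\tau_1^{3}$ that feel the rescaling are the monomials $\prod_{j\neq i}\Delta(\hat l_i,\hat l_j)$. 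A direct count shows that inside one five-tuple this monomial is multiplied by $\lambda^{3}$ whenever $l_M$ occurs among the five vectors but is \emph{not} the centre of projection $l_i$ (that is, $M\neq i$), and is unchanged when $M=i$. Expanding with $a\otimes\lambda^{3}x=a\otimes x+3\,(a\otimes\lambda)$ and $D\log(\lambda^{3}P)=3\,D\log\lambda+D\log P$, the factor $3$ cancels the normalising $-\tfrac13$ in the definition of $\tau_1^{3}$.

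Next I would isolate the variation on one five-tuple. By the previous step the change of $\tau_1^{3}(l_0,\ldots,l_4)$ under $l_m\mapsto\lambda l_m$ equals $-\sum_{i\neq m}(-1)^i\bigl(\llbracket r(l_i|\cdots)\rrbracket^D_2\otimes\lambda+D\log\lambda\otimes[r(l_i|\cdots)]_2\bigr)$. Applying Lemma \ref{4pt} in the $\beta^D_2(F)\otimes F^\times$ summand and Lemma \ref{Gon5term} in the $F\otimes\mathcal{B}_2(F)$ summand, the full alternating sum over all five centres vanishes, so this variation collapses onto the single missing term, the summand whose centre of projection is the rescaled vector itself:
\[
(-1)^{m}\Bigl(\llbracket r(l_m|l_0,\ldots,\hat l_m,\ldots,l_4)\rrbracket^D_2\otimes\lambda+D\log\lambda\otimes[r(l_m|l_0,\ldots,\hat l_m,\ldots,l_4)]_2\Bigr).
\]

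Then I would assemble these contributions through $d$. Only the five faces $(l_0,\ldots,\hat l_k,\ldots,l_5)$ with $k\neq M$ contain $l_M$, and each contributes its ``centre $=l_M$'' term. Projecting the five vectors $\{l_j:j\neq M\}$ from $l_M$ into $\varmathbb{P}(V_3/\langle l_M\rangle)\cong\varmathbb{P}^1_F$ yields five points $\bar l_j$, and $r(l_M|\{l_j:j\neq k,M\})$ is exactly the ordinary $\varmathbb{P}^1_F$ cross-ratio of the four points $\{\bar l_j:j\neq k\}$. Thus the total variation of $\tau_1^{3}\circ d$ becomes $\sum_{k\neq M}(-1)^{k+m(k)}\bigl(\llbracket r(\bar l_{j\neq k})\rrbracket^D_2\otimes\lambda+D\log\lambda\otimes[r(\bar l_{j\neq k})]_2\bigr)$, a signed cross-ratio five-term expression for the $\bar l_j$. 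The step I expect to be most delicate is the sign bookkeeping: writing $m(k)$ for the position of $l_M$ after deleting $l_k$ (so $m(k)=M-1$ if $k<M$ and $m(k)=M$ if $k>M$) and $s$ for the rank of $k$ among the indices $\neq M$, one checks in both cases that $(-1)^{k+m(k)}=(-1)^{M-1}(-1)^{s}$ uniformly in $k$. Hence the variation is $(-1)^{M-1}$ times the cross-ratio five-term relation for the five points $\bar l_j$, which vanishes in $\beta^D_2(F)$ (the defining five-term relation of \S\ref{func_beta}) in the first slot and in $\mathcal{B}_2(F)$ (the relation $R_2(F)$) in the second. Therefore $\tau_1^{3}\circ d$ is unaffected by the rescaling, which proves the lemma.
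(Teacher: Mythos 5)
Your proposal is correct and takes essentially the same route as the paper: reduce to rescaling a single vector, observe that the face omitting it contributes nothing, use the projected five-term relation (Lemma \ref{4pt}) to collapse each remaining face's variation onto the single term whose centre of projection is the rescaled vector, and identify the resulting signed sum over faces as a five-term relation for the five points projected from that vector, hence zero. If anything you are more careful than the paper on two points: you track the $F\otimes\mathcal{B}_2(F)$ summand explicitly via Lemma \ref{Gon5term} (the paper writes the $D\log$ factors identically in the scaled and unscaled expressions, glossing over $D\log(\lambda^3 P)=D\log P+3D\log\lambda$, so that summand's variation is never visibly cancelled), and you do the sign bookkeeping for a general rescaled index $M$ where the paper simply takes the rescaled vector to be $l_0$.
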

\begin{proof}
The proof of this lemma is quite similar to the proof of proposition 3.9 of \cite{Gonc}, but we will out line here main steps because this proof involves more calculations. It is enough to prove that the following
\[\tau_1^{(3)}\circ d\{(l_0,\ldots,l_5)-(\lambda_0l_0,\ldots,\lambda_5l_5)\}=0\quad (\lambda_i \in F^\times) \]
We will consider the case $\lambda_1=\cdots=\lambda_5=1$ and $\lambda_0=\lambda$

The first summand $(l_1,\ldots,l_5)$ will not give any contribution to the difference
\begin{align}\label{eq1}
\tau_1^{3}\circ d\{(l_0,\ldots,l_5)-(\lambda_0l_0,\ldots,l_5)\}
\end{align}
Now consider the second summand $-(l_0,l_2,l_3,l_4,l_5)$
\begin{align*}
\frac{1}{3}\Bigg(-&\llbracket r(l_0|l_2,l_3,l_4,l_5) \rrbracket^D_2\otimes\prod_{j=2}^5\Delta(\hat{l}_0,\hat{l}_j)+\llbracket r(l_2|l_0,l_3,l_4,l_5)\rrbracket^D_2\otimes \lambda^3\prod_{j=0,3,4,5}\Delta(\hat{l}_2,\hat{l}_j)\\
-&\llbracket r(l_3|l_0,l_2,l_4,l_5)\rrbracket^D_2\otimes \lambda^3\prod_{j=0,2,4,5}\Delta(\hat{l}_3,\hat{l}_j)+\llbracket r(l_4|l_0,l_2,l_3,l_5)\rrbracket^D_2\otimes \lambda^3\prod_{j=0,2,3,5}\Delta(\hat{l}_4,\hat{l}_j)\\
-&\llbracket r(l_5|l_0,l_2,l_3,l_4)\rrbracket^D_2\otimes \lambda^3\prod_{j=0,2,3,4}\Delta(\hat{l}_5,\hat{l}_j)\\
+&\sum^5_{\substack{i=0\\i\neq1}}\frac{D\left(\prod_{j\neq1,i}\Delta(\hat{l}_i,\hat{l}_j)\right)}{\Delta(\hat{l}_i,\hat{l}_j)}\otimes \left[r(l_i|l_0,\ldots,\hat{l}_i,\ldots,l_4)\right]\Bigg)\\
-\frac{1}{3}\Bigg(-&\llbracket r(l_0|l_2,l_3,l_4,l_5) \rrbracket^D_2\otimes\prod_{j=2}^5\Delta(\hat{l}_0,\hat{l}_j)+\llbracket r(l_2|l_0,l_3,l_4,l_5)\rrbracket^D_2\otimes \prod_{j=0,3,4,5}\Delta(\hat{l}_2,\hat{l}_j)\\
-&\llbracket r(l_3|l_0,l_2,l_4,l_5)\rrbracket^D_2\otimes \prod_{j=0,2,4,5}\Delta(\hat{l}_3,\hat{l}_j)+\llbracket r(l_4|l_0,l_2,l_3,l_5)\rrbracket^D_2\otimes \prod_{j=0,2,3,5}\Delta(\hat{l}_4,\hat{l}_j)\\
-&\llbracket r(l_5|l_0,l_2,l_3,l_4)\rrbracket^D_2\otimes \prod_{j=0,2,3,4}\Delta(\hat{l}_5,\hat{l}_j)\\
+&\sum^5_{\substack{i=0\\i\neq1}}\frac{D\left(\prod_{j\neq1,i}\Delta(\hat{l}_i,\hat{l}_j)\right)}{\Delta(\hat{l}_i,\hat{l}_j)}\otimes \left[r(l_i|l_0,\ldots,\hat{l}_i,\ldots,l_4)\right]\Bigg)
\end{align*}
This difference gives us
\begin{align}\label{eq2}
&\frac{1}{3}\Big(\llbracket r(l_2|l_0,l_3,l_4,l_5) \rrbracket^D_2-\llbracket r(l_3|l_0,l_2,l_4,l_5)\rrbracket^D_2\notag\\
&+\llbracket r(l_4|l_0,l_2,l_3,l_5)\rrbracket^D_2-\llbracket r(l_5|l_0,l_2,l_3,l_4)\rrbracket^D_2 \Big)\otimes \lambda^3 
\end{align}
If we apply lemma \ref{4pt} to the 5-tuple $(l_0,l_2,l_3,l_4,l_5)$ of points in $\varmathbb{P}_F^2$ then we see that
\begin{align}
\llbracket r(l_0|l_2,l_3,l_4,l_5) \rrbracket^D_2&=\llbracket r(l_2|l_0,l_3,l_4,l_5) \rrbracket^D_2-\llbracket r(l_3|l_0,l_2,l_4,l_5) \rrbracket^D_2 \notag\\
&+\llbracket r(l_4|l_0,l_2,l_3,l_5)\rrbracket^D_2-\llbracket r(l_5|l_0,l_2,l_3,l_4)\rrbracket^D_2 \notag 
\end{align}
Then equation \ref{eq2} can be written as 
\begin{equation}\label{eq3}
\frac{1}{3}\llbracket r(l_0|l_2,l_3,l_4,l_5) \rrbracket^D_2\otimes \lambda^3
\end{equation}
The contribution of the summand $(-1)^i(l_0,\ldots,\hat{l}_i,\ldots,l_5)$ in equation \ref{eq3} is
\[\frac{1}{3}(-1)^{i-1}\llbracket r(l_0|l_1,\ldots,\hat{l}_i,\ldots,l_5)\rrbracket^D_2\otimes \lambda^3\]
Now for all summands
\begin{align}
\frac{1}{3}\sum_{i=1}^5(-1)^{i-1}\llbracket r(l_0|l_1,\ldots,\hat{l}_i,\ldots,l_5)\rrbracket^D_2\otimes \lambda^3 \notag
\end{align}
According to lemma \ref{4pt} left factor of the above is projected five-term relation in $\beta^D_2(F)$ and is zero.
\end{proof}
\begin{thm}\label{claim3a}
The following diagram 
\begin{displaymath}
\xymatrix{
C_5(3)\ar[d]^{\tau^3_1}\ar[rr]^{d}&&C_4(3)\ar[d]^{\tau^3_0}\\
\left(\beta^D_2(F)\otimes F^\times\right)\oplus\left(F\otimes \mathcal{B}_2(F)\right)\ar[rr]^{\qquad\qquad\partial}&&F\otimes \bigwedge^2F^\times}
\end{displaymath}

is commutative i.e. $\tau_0^{3}\circ d=\partial^D\circ\tau_1^{3}$
\end{thm}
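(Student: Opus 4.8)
The plan is to compute both composites $\partial^D\circ\tau_1^{3}$ and $\tau_0^{3}\circ d$ explicitly as elements of $F\otimes\bigwedge^2 F^\times$ and to check that they coincide. Because the target map $\partial^D$ is given by separate formulas on the two summands $\beta_2^D(F)\otimes F^\times$ and $F\otimes\mathcal{B}_2(F)$, and because $\tau_1^{3}$ is itself the sum of a term landing in each summand, I would track the two contributions in parallel; as indicated in the introduction the two computations are structurally almost identical, so essentially one argument does both.

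The first step is to rewrite, for five points in $\varmathbb{P}^2_F$, each projected cross-ratio $r(l_i\mid l_0,\ldots,\hat l_i,\ldots,l_4)$ and its complement $1-r(l_i\mid\ldots)$ as a ratio of products of the $3\times3$ determinants $\Delta(l_p,l_q,l_r)$, using the Pl\"ucker relation that is the $V_3$-analogue of the cross-ratio identity \eqref{2did}. Applying $D\log$ then converts each of $\frac{D(1-r)}{1-r}$, $\frac{D(r)}{r}$ and $\frac{D\left(\prod_{j\neq i}\Delta(\hat l_i,\hat l_j)\right)}{\prod_{j\neq i}\Delta(\hat l_i,\hat l_j)}$ into an $F$-linear combination of the elementary logarithmic derivatives $\frac{D\Delta(l_p,l_q,l_r)}{\Delta(l_p,l_q,l_r)}$, and expands every wedge factor into wedges of individual determinants. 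This is the cross-ratio rewriting advertised in the introduction, performed one dimension below the triple-ratio.

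Next I would expand $\tau_0^{3}\circ d$ directly: the differential produces the five faces $(-1)^i(l_0,\ldots,\hat l_i,\ldots,l_4)$, and by \eqref{t03def} the map $\tau_0^{3}$ evaluates on each face as a four-term alternating sum, so that $\tau_0^{3}\circ d(l_0,\ldots,l_4)$ is a sum of twenty terms of the shape $\frac{D\Delta}{\Delta}\otimes\frac{\Delta'}{\Delta''}\wedge\frac{\Delta'''}{\Delta''}$. Expanding each ratio-wedge into wedge monomials $\Delta'\wedge\Delta'''-\Delta'\wedge\Delta''-\Delta''\wedge\Delta'''$ and collecting both composites according to their left tensor factor, i.e.\ grouping all contributions carrying a fixed $\frac{D\Delta(l_p,l_q,l_r)}{\Delta(l_p,l_q,l_r)}$, reduces the statement to an identity between the resulting $\bigwedge^2 F^\times$-coefficients, one determinant at a time.

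The matching then rests on three ingredients: the relations $\Delta\wedge\Delta=0$ together with the standing convention of working modulo $2$-torsion, which kill the diagonal and duplicated wedge terms; the projected five-term relations of Lemma \ref{4pt} in $\beta_2^D(F)$ and of Lemma \ref{Gon5term} in $\mathcal{B}_2(F)$, which collapse the alternating sums over $i$ exactly as in the preceding independence-of-length lemma; and the Pl\"ucker identity once more to reconcile the remaining wedges. I expect the main obstacle to be this final bookkeeping rather than any single conceptual point: one must keep the normalisation $-\tfrac13$, the signs $(-1)^i$ and the split across the two summands mutually consistent, and verify that the combinatorial multiplicities with which a given $\frac{D\Delta}{\Delta}$ appears on the two sides agree, the factor $\tfrac13$ being there to absorb the threefold overcounting that arises when the projected-cross-ratio factorisation is substituted into the alternating sum.
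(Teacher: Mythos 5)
Your overall strategy coincides with the paper's: expand both composites into elementary terms $\frac{D\Delta(l_p,l_q,l_r)}{\Delta(l_p,l_q,l_r)}\otimes(\cdot)\wedge(\cdot)$, by writing $r(l_i|\ldots)$ and $1-r(l_i|\ldots)$ as ratios of $3\times3$ determinants via the Pl\"ucker identity and using additivity of $D\log$, then group everything by the left tensor factor and compare multiplicities, with the $-\frac{1}{3}$ absorbing a coefficient $3$. Even your term counts (twenty terms of the shape $\frac{D\Delta}{\Delta}\otimes\frac{\Delta'}{\Delta''}\wedge\frac{\Delta'''}{\Delta''}$, expanding to sixty wedge monomials) agree with the paper's bookkeeping.

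However, one of the three ingredients you name for the final matching is not available and that step would fail as stated: the projected five-term relations of Lemma \ref{4pt} and Lemma \ref{Gon5term} cannot ``collapse the alternating sums over $i$'' in this square. Those are relations among classes in $\beta_2^D(F)$, resp.\ $\mathcal{B}_2(F)$; once $\partial^D$ has been applied, no such classes survive in the target $F\otimes\bigwedge^2F^\times$, and the relations cannot be applied before $\partial^D$ either, because in $\tau_1^3$ each term of the sum over $i$ carries a different right factor $\prod_{j\neq i}\Delta(\hat{l}_i,\hat{l}_j)$ (resp.\ a different left factor $D\log\bigl(\prod_{j\neq i}\Delta(\hat{l}_i,\hat{l}_j)\bigr)$ on the $F\otimes\mathcal{B}_2(F)$ side), so the five-term combination never appears with a common tensor factor. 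This is exactly what distinguishes the present square from the independence-of-length lemma (where differencing two evaluations leaves the common factor $\lambda^3$) and from Theorem \ref{claim3b} (where permutation tricks produce terms sharing the right factor $(013)$); in both of those places the five-term lemmas genuinely do the work. The paper's proof of this theorem uses no five-term relation at all: after full expansion it cancels terms pairwise, finds each surviving elementary term with coefficient $-3$ cancelling the $-\frac{1}{3}$, and matches term-wise against $\tau_0^3\circ d$. So your plan does go through, but only via the brute-force multiplicity count you describe last, not via the five-term shortcut you lean on.
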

\begin{proof}
From now on we will denote $\Delta(l_0,l_1,l_2)=(l_0,l_1,l_2)$
\begin{align}\label{lhs3}
&\tau_0^{3}\circ d(l_0,\ldots,l_4)\notag\\
&=\tau_0^{3}\left(\sum_{i=0}^4(-1)^i(l_0,\ldots,\hat{l}_i,\ldots,l_4)\right)\notag \\
&=\widetilde{\text{Alt}}_{(01234)}\Bigg(\sum^3_{i=0}(-1)^i\frac{D(l_0,\ldots,\hat{l}_i,\ldots,\hat{l}_3)}{(l_0,\ldots,\hat{l}_i,\ldots,\hat{l}_3)}\otimes\frac{(l_0,\ldots,\hat{l}_{i+1},\ldots,\hat{l}_3)}{(l_0,\ldots,\hat{l}_{i+2},\ldots,\hat{l}_3)}\notag\\
&\quad\quad\quad\quad\quad\quad\quad\wedge\frac{(l_0,\ldots,\hat{l}_{i+3},\ldots,\hat{l}_3)}{(l_0,\ldots,\hat{l}_{i+2},\ldots,\hat{l}_3)}\Bigg),\qquad i\mod 4 
\end{align}
where $\widetilde{\text{Alt}}$ differs from usual alternation sum in the sense that we do not divide by the order of the group for $\widetilde{\text{Alt}}$. If we expand the inner sum first then we will get 4 terms which can be simplified in 12 terms, i.e., we will have terms of the following shape:
\[\frac{D(l_1,l_2,l_3)}{(l_1,l_2,l_3)}\otimes(l_0,l_2,l_3)\wedge(l_0,l_1,l_3)\quad\text{and so on}\]
Then we pass to the alternation which gives us 60 terms so we keep together those terms which have same first factor e.g., 
\begin{align}
+&\frac{D(l_0,l_1,l_2)}{(l_0,l_1,l_2)}\otimes\{(l_0,l_1,l_3)\wedge(l_1,l_2,l_3)-(l_0,l_1,l_4)\wedge(l_1,l_2,l_4)-(l_0,l_2,l_3)\wedge(l_1,l_2,l_3)\notag\\
&\quad\quad\quad\quad\quad+(l_0,l_2,l_4)\wedge(l_1,l_2,l_4)-(l_0,l_1,l_3)\wedge(l_0,l_2,l_3)+(l_0,l_1,l_4)\wedge(l_0,l_2,l_4)\}\notag\\
&\vdots\notag\\
&\text{and so on}\notag
\end{align}

The other part of the calculation is very long and tedious but we will try to include some steps here.

Going to the other side of the diagram, we find
\begin{align}
\partial^D\circ \tau_1^{3}(l_0,\ldots,l_4)=-\frac{1}{3}\partial^D\Bigg(\sum_{i=0}^4&(-1)^i\llbracket r(l_i|l_0,\ldots,\hat{l}_i,\ldots,l_4) \rrbracket^D_2 \otimes \prod_{j\neq i}\Delta(\hat{l}_i,\hat{l}_j) \notag \\
&+\frac{D\left(\prod_{j\neq i}\Delta(\hat{l}_i,\hat{l}_j)\right)}{\prod_{j\neq i}\Delta(\hat{l}_i,\hat{l}_j)}\otimes [r(l_i|l_0,\ldots,\hat{l}_i,\ldots,l_4)]\}\Bigg) \notag
\end{align}

\begin{align}\label{inter_step}
=-\frac{1}{3}\sum_{i=0}^4&(-1)^i\Big(\frac{D\left(1-r(l_i|l_0,\ldots,\hat{l}_i,\ldots,l_4)\right)}{1-r(l_i|l_0,\ldots,\hat{l}_i,\ldots,l_4)}\otimes r(l_i|l_0,\ldots,\hat{l}_i,\ldots,l_4)\wedge \prod_{j\neq i}\Delta(\hat{l}_i,\hat{l}_j)\notag \\
&-\frac{D\left(r(l_i|l_0,\ldots,\hat{l}_i,\ldots,l_4)\right)}{r(l_i|l_0,\ldots,\hat{l}_i,\ldots,l_4)}\otimes\{1-r(l_i|l_0,\ldots,\hat{l}_i,\ldots,l_4)\}\wedge \prod_{j\neq i}\Delta(\hat{l}_i,\hat{l}_j)\notag \\
&+\frac{D\left(\prod_{j\neq i}\Delta(\hat{l}_i,\hat{l}_j)\right)}{\prod_{j\neq i}\Delta(\hat{l}_i,\hat{l}_j)}\otimes \left(1-r(l_i|l_0,\ldots,\hat{l}_i,\ldots,l_4)\right)\wedge r(l_i|l_0,\ldots,\hat{l}_i,\ldots,l_4)\Big)\notag
\end{align}

From now on we will use $(ijk)$ instead of $\Delta(l_i,l_j,l_k)$ as a shorthand. If we expand the above sum with respect to $i$, then we will get a long expression. For example when $i=0$, we have 

\begin{align}
&+\frac{D\left(\frac{(012)(034)}{(013)(024)}\right)}{\frac{(012)(034)}{(013)(024)}}\otimes\frac{(014)(023)}{(013)(024)}\wedge (234)(134)(124)(123)\notag \\
&-\frac{D\left(\frac{(014)(023)}{(013)(024)}\right)}{\frac{(014)(023)}{(013)(024)}}\otimes\frac{(012)(034)}{(013)(024)}\wedge (234)(134)(124)(123)\notag \\
&+\frac{D\left((234)(134)(124)(123)\right)}{(234)(134)(124)(123)}\otimes \frac{(012)(034)}{(013)(024)}\wedge\frac{(014)(023)}{(013)(024)}\notag
\end{align}
and we can get four more similar expressions for the other values of $i$ as well. If we collect terms of type $\frac{D(ijk)}{(ijk)}\otimes \cdots\wedge\cdots$ i.e., fix $i,j$ and $k$ in all five expressions (one of them is given above), then we will see a huge amount of terms but we cancel terms pairwise and collect terms of the same kind, we get each remaining term with the coefficient ``3''. So we can write in the following form. 
\begin{align}
-3&\frac{D(012)}{(012)}\otimes\{(013)\wedge(123)-(014)\wedge(124)-(023)\wedge(123)\notag\\
&\quad\quad\quad+(024)\wedge(124)-(013)\wedge(023)+(014)\wedge(024)\}\notag\\ 
-3&\frac{D(013)}{(013)}\otimes\{(014)\wedge(134)+(023)\wedge(123)-(012)\wedge(123)\notag\\
&\quad\quad\quad-(034)\wedge(134)-(014)\wedge(034)+(012)\wedge(023)\}\notag \\
&\vdots \notag \\
&\text{and so on}\notag 
\end{align}
It turns out that every term has ``$-3$'' as a coefficient that cancels the factor $-\frac{1}{3}$ in the definition of $\tau^3_1$ then comparing the expression above with \eqref{lhs3}, we find after a long calculation that both agree (term-wise)
%we can get the same result as we presented in (\ref{lhs3}) by placing the related terms together. Now we can say that the theorem \ref{claim2} is proved.
\end{proof}

Here we have another result which will then complete the commutativity of  diagram (\ref{bicomp2})
\begin{thm}\label{claim3b}
The following diagram 
\begin{displaymath}
\xymatrix{
C_6(3)\ar[d]^{\tau^3_2}\ar[rr]^{d}&&C_5(3)\ar[d]^{\tau^3_1}\\
\beta^D_3(F)\ar[rr]^{\partial\qquad\qquad}&&\left(\beta^D_2(F)\otimes F^\times\right)\oplus\left(F\otimes \mathcal{B}_2(F)\right)}
\end{displaymath}
is commutative i.e. $\tau^3_2\circ\partial^D=d\circ\tau^3_1$.
\end{thm}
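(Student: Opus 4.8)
The plan is to expand both composites, split everything according to the two direct summands $\beta^D_2(F)\otimes F^\times$ and $F\otimes\mathcal{B}_2(F)$, and match them term by term. The two summands can be handled in parallel: in the $F\otimes\mathcal{B}_2(F)$ summand the slot carrying the determinants sees $D\log$, which is additive on products, while in the $\beta^D_2(F)\otimes F^\times$ summand that slot sees $\otimes$, which is $\varmathbb{Z}$-bilinear in $F^\times$; and the functional equations available in $\beta^D_2(F)$ (inversion, two-term, five-term, Lemma \ref{4pt}) mirror those in $\mathcal{B}_2(F)$ (Lemma \ref{Gon5term}). Throughout I write $t=t(l_0,\ldots,l_5)$ for the triple-ratio argument $\frac{\Delta(l_0,l_1,l_3)\Delta(l_1,l_2,l_4)\Delta(l_2,l_0,l_5)}{\Delta(l_0,l_1,l_4)\Delta(l_1,l_2,l_5)\Delta(l_2,l_0,l_3)}$, so that the claim is the commutativity $\partial^D\circ\tau^3_2=\tau^3_1\circ d$.

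First I would compute the right-hand composite. Since $\partial^D$ is a homomorphism and commutes with the signed permutation sum, the formula for $\partial^D$ on $\llbracket\,\cdot\,\rrbracket^D_3$ gives
\[\partial^D\circ\tau^3_2(l_0,\ldots,l_5)=\tfrac{2}{45}\,\mathrm{Alt}_6\Big(\llbracket t\rrbracket^D_2\otimes t+D\log t\otimes[t]_2\Big),\]
whose $\beta^D_2(F)\otimes F^\times$-component is $\tfrac{2}{45}\mathrm{Alt}_6(\llbracket t\rrbracket^D_2\otimes t)$ and whose $F\otimes\mathcal{B}_2(F)$-component is $\tfrac{2}{45}\mathrm{Alt}_6(D\log t\otimes[t]_2)$. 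On the other side I would expand $d$ into its six $5$-tuples $(-1)^k(l_0,\ldots,\hat l_k,\ldots,l_5)$ and apply $\tau^3_1$ to each, producing in each summand a sum of thirty projected-cross-ratio contributions, each carrying a product of four $3\times3$ determinants in its remaining slot.

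The geometric heart is the factorization of the triple ratio into projected cross-ratios. Exploiting the cyclic product shape $t=\prod_{\mathrm{cyc}}\frac{\Delta(l_0,l_1,l_3)}{\Delta(l_0,l_1,l_4)}$ under $(0\,1\,2)(3\,4\,5)$, a direct determinant manipulation gives
\[t(l_0,\ldots,l_5)=r(l_0| l_1,l_2,l_4,l_3)\cdot r(l_2| l_4,l_5,l_0,l_1),\]
the common factor $\Delta(l_0,l_2,l_4)$ cancelling; thus every term of $\tau^3_2$ is a product of two projected cross-ratios of the six points. Feeding this into the $F^\times$- (resp. $F$-) slot and using bilinearity (resp. additivity of $D\log$) lets me replace $t$ in those slots by its individual determinant factors, so that in each summand both composites become sums of terms consisting of a single $\Delta$ (in the $F^\times$- resp. $F$-slot) tensored with a dilogarithm class. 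Fixing one determinant $\Delta$ and collecting its coefficient on each side then reduces the theorem, for each $\Delta$, to an identity between an $\mathrm{Alt}_6$-combination of triple-ratio classes and the combination of projected-cross-ratio classes coming from $\tau^3_1\circ d$; this identity is exactly what the projected five-term relation of Lemma \ref{4pt} (in $\beta^D_2(F)$) and Lemma \ref{Gon5term} (in $\mathcal{B}_2(F)$), together with the inversion and two-term relations, are designed to produce — just as the coefficient $-3$ emerged and cancelled $-\tfrac13$ in the proof of Theorem \ref{claim3a}.

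I expect the main obstacle to be combinatorial bookkeeping rather than conceptual: organizing the signed summands of $\mathrm{Alt}_6$ modulo the stabilizer of $t$, pairing them correctly with the thirty cross-ratio terms of $\tau^3_1\circ d$, and verifying that $\tfrac{2}{45}$ times the multiplicity of each surviving term matches the $-\tfrac13$ weight carried by $\tau^3_1$. Controlling signs through the antisymmetrization and through the repeated use of the inversion and two-term relations — each of which can flip a sign — is where the calculation is most error-prone, and it is the step I would carry out most carefully; it is precisely the place where the factorization of $t$ into two cross-ratios replaces the $K$-theoretic input used by Goncharov.
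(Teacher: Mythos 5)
Your proposal is correct and is essentially the paper's own argument: expand both composites, treat the two summands $\beta^D_2(F)\otimes F^\times$ and $F\otimes\mathcal{B}_2(F)$ in parallel, rewrite the triple-ratio as a product of two projected cross-ratios (you project from $l_0$ and $l_2$, the paper from $l_1$ and $l_2$ --- two of the three admissible choices, and your factorization is correct, with the common factor $\Delta(l_0,l_2,l_4)$ cancelling), and then reduce everything to projected cross-ratio classes via Lemma \ref{4pt} and Lemma \ref{Gon5term} together with the inversion/two-term relations and $\mathrm{Alt}_6$ bookkeeping. The one step your sketch leaves implicit --- that $\llbracket\cdot\rrbracket^D_2$ is not multiplicative, so the factorization must be fed through the five-term relation and the resulting extra term killed by a stabilizer/odd-permutation alternation argument --- is handled in exactly this way in the paper, so it is a difference of detail, not of method.
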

\begin{proof}
The map $\tau^3_2$ is based on generalized cross-ratios of $3 \times 3$ determinants. The total number of terms due to map $\tau^3_2$ will be 720 which can further be reduced to 120 due to symmetry (cyclic and inverse). The direct procedure which was used in the previous proof will be very lengthy and tedious so we will use techniques of combinatorics and will rewrite the triple-ratio in to the product of two cross-ratios to prove this result. 

We first compute $\partial\circ\tau^3_2(l_0,\ldots,l_5)$ and we already have 
\[\tau_2^3(l_0,\ldots,l_5)=\frac{2}{45}\text{Alt}_6\left\llbracket \frac{\Delta(l_0,l_1,l_3)\Delta(l_1,l_2,l_4)\Delta(l_2,l_0,l_5)}{\Delta(l_0,l_1,l_4)\Delta(l_1,l_2,l_5)\Delta(l_2,l_0,l_3)}\right\rrbracket^D_3,\] 
from now, in this proof we will use $(ijk)$ for $\Delta(l_i,l_j,l_k)$ and $(0.\ldots,5)$ for $(l_0,\ldots,l_5)$ as a short hand.

The above becomes \[\tau_2^3(l_0,\ldots,l_5)=\frac{2}{45}\text{Alt}_6\left\llbracket \frac{(013)(124)(205)}{(014)(125)(203)}\right\rrbracket^D_3\]
\begin{align}\label{alt0}
\partial^D\circ\tau^3_2(l_0,\ldots,l_5)=&\frac{2}{45}\text{Alt}_6\left(\left\llbracket \frac{(013)(124)(205)}{(014)(125)(203)}\right\rrbracket^D_2\otimes \frac{(013)(124)(205)}{(014)(125)(203)}\right)\notag\\
+&\frac{2}{45}\text{Alt}_6\left(D\log\frac{(013)(124)(205)}{(014)(125)(203)}\otimes \left[\frac{(013)(124)(205)}{(014)(125)(203)}\right]_2\right)
\end{align}
First we will consider first term of the above
\begin{align*}
=\frac{2}{45}\Bigg(&\text{Alt}_6\left\lbrace\llbracket r_3(0\ldots5)\rrbracket^D_2 \otimes (013)\right\rbrace+\text{Alt}_6\left\lbrace\llbracket r_3(0\ldots5)\rrbracket^D_2 \otimes (124)\right\rbrace\notag\\
+&\text{Alt}_6\left\lbrace\llbracket r_3(0\ldots5)\rrbracket^D_2 \otimes (205)\right\rbrace-\text{Alt}_6\left\lbrace\llbracket r_3(0\ldots5)\rrbracket^D_2 \otimes (014)\right\rbrace\notag\\
-&\text{Alt}_6\left\lbrace\llbracket r_3(0\ldots5)\rrbracket^D_2 \otimes (125)\right\rbrace-\text{Alt}_6\left\lbrace\llbracket r_3(0\ldots5)\rrbracket^D_2 \otimes (203)\right\rbrace\Bigg)\notag
\end{align*}
where
\[r_3(0,\ldots,5)=\frac{(013)(124)(205)}{(014)(125)(203)}\]
Use the even cycle (012)(345)
\[\text{Alt}_6\left\lbrace\llbracket r_3(012345)\rrbracket^D_2\otimes (013)\right\rbrace=\text{Alt}_6\left\lbrace\llbracket r_3(120453)\rrbracket^D_2\otimes (124)\right\rbrace\]
Now we use $\llbracket r_3(012345)\rrbracket^D_2=\llbracket r_3(120453)\rrbracket^D_2$ and similar for the others, then the above can be written as
\begin{align*}
=\frac{2}{45}\left(3\text{Alt}_6\left\lbrace\llbracket r_3(0\ldots5)\rrbracket^D_2 \otimes (013)\right\rbrace-3\text{Alt}_6\left\lbrace\llbracket r_3(0\ldots5)\rrbracket^D_2 \otimes (014)\right\rbrace\right)
\end{align*}
Use the odd cycle (34)
\begin{align*}
=\frac{2}{45}\left(6\text{Alt}_6\left\lbrace\llbracket r_3(0\ldots5)\rrbracket^D_2 \otimes (013)\right\rbrace\right)
\end{align*}
If we apply the odd permutation (03), then
\[=\frac{2}{45}\left(3\text{Alt}_6\left\lbrace\llbracket r_3(012345)\rrbracket^D_2\otimes(013)\right\rbrace-3\text{Alt}_6\left\lbrace\llbracket r_3(312045)\rrbracket^D_2\otimes(310)\right\rbrace\right)\]
but (013)=(310) so up to 2-torsion
\[=\frac{2}{15}\text{Alt}_6\left\lbrace\Big(\llbracket r_3(012345)\rrbracket^D_2-\llbracket r_3(312045)\rrbracket^D_2\Big)\otimes(013)\right\rbrace\]
Now we will use here the crucial idea of this proof in which we will divide the triple-ratio into the product of two projected cross-ratios of four points each. There are exactly 3 ways to divide this ratio into such a product. i.e., if $r_3(a,b,c,d,e,f)$ then it can be divided by projection from $a$ and $b$, projection from $a$ and $c$ or projection from $b$ and $c$. In our case we will divide by projection from 1 and 2.
\[=\frac{2}{15}\text{Alt}_6\left\lbrace\left(\left\llbracket\frac{r(2|1053)}{r(1|0234)}\right\rrbracket^D_2-\left\llbracket\frac{r(2|1350)}{r(1|3204)}\right\rrbracket^D_2\right)\otimes(013)\right\rbrace\]
Apply lemma \ref{4pt} (five-term relation in $\beta_2^D(F)$) then we will have
\begin{align}\label{alt1}
=\frac{2}{15}\text{Alt}_6\left\lbrace\left(-\left\llbracket\frac{r(2|1530)}{r(1|0342)}\right\rrbracket^D_2+\llbracket r(2|1053)\rrbracket^D_2-\llbracket r(1|0234)\rrbracket^D_2\right)\otimes(013)\right\rbrace
\end{align}
We will treat the above three terms individually. We consider first term now,
\[\text{Alt}_6\left\lbrace\left\llbracket\frac{r(2|1530)}{r(1|0342)}\right\rrbracket^D_2\otimes(013)\right\rbrace\]
For each individual determinant, e.g. (013), we have the following terms.
\[\text{Alt}_6\left\lbrace\left\llbracket\frac{r(2|1530)}{r(1|0342)}\right\rrbracket^D_2\otimes(013)\right\rbrace=\text{Alt}_6\left\lbrace\frac{1}{36}\text{Alt}_{(013)(245)}\left(\left\llbracket\frac{r(2|1530)}{r(1|0342)}\right\rrbracket^D_2\otimes(013)\right)\right\rbrace\]
We need a subgroup in $S_6$ which fixes (013) as a determinant i.e. $(013)\sim(310)\sim(301)\cdots$.

Here $S_3$ permuting $\{0,1,3\}$ and another one permuting $\{2,4,5\}$ i.e. $S_3\times S_3$. Now consider
\begin{align*}
&\text{Alt}_{(013)(245)}\left\lbrace\left\llbracket\frac{r(2|1530)}{r(1|0342)}\right\rrbracket^D_2\otimes(013)\right\rbrace\\
=&\text{Alt}_{(013)(245)}\left\lbrace\left\llbracket\frac{(210)(235)}{(213)(250)}\cdot\frac{(104)(132)}{(102)(135)}\right\rrbracket^D_2\otimes(013)\right\rbrace\\
=&\text{Alt}_{(013)(245)}\left\lbrace\left\llbracket\frac{(253)(104)}{(250)(134)}\right\rrbracket^D_2\otimes(013)\right\rbrace\\
&\text{ By using odd permutation (25) the above becomes}\\
=&0
\end{align*}
The new shape of (\ref{alt1}) is
\begin{align}\label{alt2}
=\frac{2}{15}\text{Alt}_6\left\lbrace\left(\llbracket r(2|1053)\rrbracket^D_2-\llbracket r(1|0234)\rrbracket^D_2\right)\otimes (013)\right\rbrace
\end{align}
Now we will consider the first terms
\begin{align}
&\frac{2}{15}\text{Alt}_6\left\lbrace\llbracket r(2|1053)\rrbracket^D_2\otimes (013)\right\rbrace\notag\\
=&\frac{2}{15}\text{Alt}_6\left\lbrace\frac{1}{6}\text{Alt}_{(245)}\llbracket r(2|1053)\rrbracket^D_2\otimes (013)\right\rbrace\notag\\
=&\frac{1}{45}\text{Alt}_6\{\Big(\llbracket r(4|1023)\rrbracket^D_2-
\llbracket r(2|1043)\rrbracket^D_2\notag\\
&\quad\quad\quad+\llbracket r(5|1043)\rrbracket^D_2-\llbracket r(4|1053)\rrbracket^D_2\notag\\
&\quad\quad\quad+\llbracket r(2|1053)\rrbracket^D_2-\llbracket r(5|1023)\rrbracket^D_2\Big)\otimes(013)\}\notag
\end{align}
We are able to use lemma \ref{4pt} (projected five-term relation in $\beta^D_2(F)$) here.
\begin{align}\label{alt3}
=\frac{1}{45}\text{Alt}_6\{\Big(&\llbracket r(0|1234)\rrbracket^D_2-\llbracket r(1|0234)\rrbracket^D_2-\llbracket r(3|0124)\rrbracket^D_2\notag\\
+&\llbracket r(0|1435)\rrbracket^D_2-\llbracket r(1|0435)\rrbracket^D_2+\llbracket r(3|0145)\rrbracket^D_2\notag\\
+&\llbracket r(0|1532)\rrbracket^D_2-\llbracket r(1|0532)\rrbracket^D_2+\llbracket r(3|0152)\rrbracket^D_2\Big)\otimes (013)\}\notag\\
&\text{Use the cycle (013)(245) then we get}\notag\\
=\frac{1}{45}\cdot9\text{Alt}_6&\left\lbrace\llbracket r(0|1234)\rrbracket^D_2\otimes(013)\right\rbrace
\end{align}
We also have $-\frac{2}{15}\text{Alt}_6\left\lbrace\llbracket r(1|0234)\rrbracket^D_2\otimes (013)\right\rbrace$ from (\ref{alt2}) which can be written as
\[\frac{1}{45}\cdot-6\text{Alt}_6\left\lbrace\llbracket r(1|0234)\rrbracket^D_2\otimes (013)\right\rbrace\]
then (\ref{alt2}) can be written as
\begin{align}
=\frac{1}{45}\text{Alt}_6\lbrace\Big(&9\llbracket r(0|1234)\rrbracket^D_2-6\left\llbracket r(1|0234)\right\rrbracket^D_2\Big)\otimes (013)\rbrace\notag
\end{align}
Use the cycle (01). We will get $\frac{1}{3}\text{Alt}_6\left\lbrace\llbracket r(0|1234)\rrbracket^D_2\otimes(013)\right\rbrace$ as a  result of (\ref{alt2}).

This gives the first term in (\ref{alt0}). For the second one, consider the second part of (\ref{alt0}) which has a $D\log$ factor in $F$ and we know that $\frac{D(ab)}{ab}=\frac{D(a)}{a}+\frac{D(b)}{b}$ and $\frac{D(\frac{a}{b})}{\frac{a}{b}}=\frac{D(a)}{a}-\frac{D(b)}{b}$, while the right factor of second term is in $\mathcal{B}_2(F)$ which is equipped with five-term relation so same procedure can be adopted for the second term as we did for first term. So, after passing through above procedure for second term, we get from the second term of (\ref{alt0})  $\frac{1}{3}\text{Alt}_6\left\lbrace D\log(013)\otimes\left[r(0|1234)\right]_2\right\rbrace$, at the end of the computation we have from the LHS of the diagram (simpler form of the diagram)
\begin{align}\label{alt4}
=\frac{1}{3}\text{Alt}_6\left\lbrace\llbracket r(0|1234)\rrbracket^D_2\otimes(013)+D\log(013)\otimes[r(0|1234)]_2\right\rbrace
\end{align}
The above allows us to rewrite $\tau^3_1$ using alternation sums. In fact, we have
\begin{align}
\tau^3_1(l_0,\ldots,l_4)=\frac{1}{3}\text{Alt}_5&\{\llbracket r(l_0|l_1,l_2,l_3,l_4)\rrbracket^D_2\otimes\Delta(l_0,l_1,l_2)\notag\\
&+D\log(\Delta(l_0,l_1,l_2))\otimes[r(l_0|l_1,l_2,l_3,l_4)]_2\}\notag
\end{align}
In reduced notation, the above can also be written as
\begin{align}
\tau^3_1(0\ldots4)=\frac{1}{3}\text{Alt}_5\left\lbrace\llbracket r(0|1234)\rrbracket^D_2\otimes(012)+D\log(012)\otimes[r(0|1234)]_2\right\rbrace\notag
\end{align}
It remains to compare $\partial\circ\tau^3_2(0,\ldots,5)$ with $\tau^3_1\circ d(0\ldots5)$. For the latter, apply cycle (012345) for $d$ and then expand Alt$_5$ from the definition of  $\tau^3_1$ so we get 
\begin{align}
\tau^3_1\circ d(0\ldots5)&=\frac{1}{3}\text{Alt}_6\left\lbrace\llbracket r(0|1234)\rrbracket^D_2\otimes(012)+D\log(012)\otimes[r(0|1234)]_2\right\rbrace\notag
\end{align}
Now use the odd permutation (23) then the above becomes
\begin{align}
=-\frac{1}{3}\text{Alt}_6\{\llbracket r(0|1324)\rrbracket^D_2\otimes(013)+D\log(013)\otimes[r(0|1324)]_2\}\notag
\end{align}
Finally use the two-term relation to get the correct sign and it will be same as (\ref{alt4}). This proves the theorem.
\end{proof}
\begin{cor}
The diagram (\ref{bicomp2}) is commutative, i.e. there is a morphism of complexes between the Grassmannian complex and a variant of  Cathelineau's complex which involves the $F$-vector spaces $\beta_3^D(F)$ and $\beta_2^D(F)$ and the groups $\mathcal{B}_2(F)$ and $F\times \bigwedge^2F^\times$.
\end{cor}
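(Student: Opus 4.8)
The plan is to obtain the corollary by assembling the two square-commutativity statements already established and verifying the remaining formal conditions that turn the vertical maps into a morphism of complexes. Concretely, a morphism of complexes here amounts to three things: that the family $(\tau_0^3,\tau_1^3,\tau_2^3)$ consists of well-defined maps between the rows of \eqref{bicomp2}, that each of the two rows is itself a complex, and that the vertical maps intertwine the differentials in every degree. First I would recall that well-definedness has already been settled: the lemmas preceding Theorems \ref{claim3a} and \ref{claim3b} show that $\tau_0^3$, $\tau_1^3$, and $\tau_1^3\circ d$ are independent of the chosen volume form $\omega\in\det V_3^*$ and of the lengths of the representing vectors, so these maps descend to the configuration groups $C_\ast(3)$ appearing in the top row.

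Next I would check that both rows are complexes. The top row is the Grassmannian complex of $\S$\ref{Gra_comp}, whose differential $d$ is the alternating sum of face maps; removing vertices preserves generic position, and $d\circ d=0$ is the usual simplicial identity, so $\cdots\to C_6(3)\to C_5(3)\to C_4(3)$ is a complex. For the bottom row I would verify $\partial^D\circ\partial^D=0$ directly on a generator $\llbracket a\rrbracket_3^D$. Using the two formulas for $\partial^D$ displayed beneath \eqref{bicomp2} one has $\partial^D\llbracket a\rrbracket_3^D=\llbracket a\rrbracket_2^D\otimes a+\tfrac{D(a)}{a}\otimes[a]_2$, and applying $\partial^D$ a second time gives
\[
\partial^D\bigl(\llbracket a\rrbracket_2^D\otimes a\bigr)=-\tfrac{D(a)}{a}\otimes(1-a)\wedge a,\qquad
\partial^D\bigl(\tfrac{D(a)}{a}\otimes[a]_2\bigr)=\tfrac{D(a)}{a}\otimes(1-a)\wedge a,
\]
where the term $\tfrac{D(1-a)}{1-a}\otimes a\wedge a$ drops out because $a\wedge a=0$ in $\bigwedge^2 F^\times$. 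The two surviving contributions cancel, so $\partial^D\circ\partial^D=0$; this is the $D$-twisted analogue of the complex \eqref{catcomp3}, whose exactness is recorded in Lemma \ref{catexa}.

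Finally, the chain-map condition is exactly the commutativity of the two squares of \eqref{bicomp2}: the right square $\tau_0^3\circ d=\partial^D\circ\tau_1^3$ is Theorem \ref{claim3a}, and the left square $\tau_1^3\circ d=\partial^D\circ\tau_2^3$ is Theorem \ref{claim3b}. Combining these with the two complex structures shows that $(\tau_0^3,\tau_1^3,\tau_2^3)$ commutes with the differentials in every degree, i.e. it is a morphism of complexes, which is the assertion of the corollary. I do not expect any genuine obstacle in this step: all of the analytic difficulty already lives in Theorem \ref{claim3b} — the rewriting of the triple-ratio as a product of two projected cross-ratios together with the repeated use of the five-term relations in $\beta_2^D(F)$ and $\mathcal{B}_2(F)$ — so the present statement is purely the formal assembly of the preceding results, the only new input being the one-line identity $a\wedge a=0$ that forces $\partial^D\circ\partial^D=0$.
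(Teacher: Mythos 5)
Your proposal is correct and matches the paper's approach: the paper's entire proof is the one-line combination of Theorem \ref{claim3a} (right square) and Theorem \ref{claim3b} (left square). Your additional verifications — well-definedness of the vertical maps and the check that $\partial^D\circ\partial^D=0$ on $\llbracket a\rrbracket_3^D$ via $a\wedge a=0$ — are correct but are left implicit in the paper, having been settled by the preceding lemmas and the definitions of the complexes.
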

\begin{proof}
The proof follows from combining Theorem \ref{claim3a} and Theorem \ref{claim3b}.
\end{proof}

Now consider the diagram (\ref{bicomp2}) and note that $\tau_1^3\circ d\in \ker\partial^D$. It is clear from the commutativity of the diagram that $\partial^D\Big(\tau_1^3\big(d(l_0,\ldots,l_5)\big)\Big)=0$.

Goncharov has given a morphism from the Grassmannian bicomplex to $\Gamma(n)$, here we try to establish a result in the following proposition for the infinitesimal case. 
\begin{rem}\label{alld}
The following maps
\begin{enumerate}
\item $C_4(3)\xrightarrow{d'} C_3(2)\xrightarrow{\tau^2_0}F\otimes F^\times$

\item $C_5(3)\xrightarrow{d'} C_4(2)\xrightarrow{\tau^2_1}\beta^D_2(F)$

\item $C_5(4)\xrightarrow{d'} C_4(3)\xrightarrow{\tau^3_0}F\otimes \wedge^2F^\times$

\item $C_{n+1}(n+1)\xrightarrow{d'} C_{n+1}(n)\xrightarrow{\tau^n_0}F\otimes \bigwedge^{n-1}F^\times$
\end{enumerate}
are zero, where 
\begin{align*}
\tau^n_{0}(l_0,\ldots,l_{n})\notag\\
=\sum_{i=0}^{n}(-1)^{i}\Bigg(&\frac{D\left(\Delta(l_0,\ldots,\hat{l}_i,\ldots,l_{n})\right)}{\Delta(l_0,\ldots,\hat{l}_i,\ldots,l_{n})}\otimes\frac{\Delta(l_0,\ldots,\hat{l}_{i+1},\ldots,l_{n})}{\Delta(l_0,\ldots,\hat{l}_{i+2},\ldots,l_{n})}\notag\\
&\wedge\cdots\wedge\frac{\Delta(l_0,\ldots,\hat{l}_{i+(n-1)},\ldots,l_{n})}{\Delta(l_0,\ldots,\hat{l}_{i+n},\ldots,l_{n})}\Bigg),\quad i\mod (n+1)
\end{align*}
\end{rem}
\section*{Acknowledgements}
This article consists on a chapter of author's doctoral thesis at University of Durham and under the supervision of Dr. Herbert Gangl. Author would also like to thank to Prof. Spencer Bloch for his valuable comments and suggestions when he visited to Durham.

\end{document}